\documentclass[5pt]{article}
\usepackage{amsmath}
\usepackage[latin1]{inputenc}
\usepackage{amsmath,amsthm,amssymb}
\usepackage{graphicx}
\usepackage[all,poly,knot]{xy}

\usepackage{listings} % ??????
\usepackage{tikz-cd} % ??????????
\usepackage{amssymb} % ???????????
\usepackage{booktabs}

\usepackage{color}

\renewcommand{\theequation}{\thesection.\arabic{equation}}

\newtheorem{lem}{Lemma}[section]
\newtheorem{thm}{Theorem} [section]

\newtheorem{exmp}{Example} [section]
\newtheorem{coro}{Corollary}[section]
\newtheorem{rem}{Remark}[section]
\newtheorem{conj}{Conjecture}[section]
\newtheorem{prob}{Problem}[section]

\title{On cyclically covering subspaces of $\mathbb{F}^n_q$}

\author{Yangcheng Li$^{1,}$\footnote{E-mail\,$:$ liyc@m.scnu.edu.cn.} \,\, Pingzhi Yuan$^{1,}$\footnote{Corresponding author. E-mail\,$:$ yuanpz@scnu.edu.cn. Supported by the National Natural Science Foundation of China (Grant No. 12171163) and Guangdong Basic and Applied Basic Research Foundation  (Grant No. 2024A1515010589).} \,\, Shuang Li$^{1,}$\footnote{E-mail\,$:$ 2338482253@qq.com.}  \,\, Yuanpeng Zeng$^{1,}$\footnote{E-mail\,$:$ zengyp2025@163.com.}   \\
	{\small\it  $^{1}$School of Mathematical Sciences, South China Normal University,}\\
	{\small\it Guangzhou 510631, Guangdong, P. R. China} \\
}

\date{}
\begin{document}
\baselineskip15pt \maketitle
\renewcommand{\theequation}{\arabic{section}.\arabic{equation}}
\catcode`@=11 \@addtoreset{equation}{section} \catcode`@=12

\begin{abstract}
For a prime power \( q \) and a positive integer \( n \), a subspace \( U \subseteq \mathbb{F}_q^n \) is called cyclically covering if the union of all its cyclic shifts covers the whole space \( \mathbb{F}_q^n \). Let \( h_q(n) \) denote the maximum possible codimension of such a subspace. This paper focuses on the case \( h_q(n) = 0 \). We provide necessary and sufficient conditions under which \( h_q(n) = 0 \) holds. As an application, we show that \( h_q(\ell^t) = 0 \) whenever \( q \) is a primitive root modulo \( \ell^t \). Moreover, we prove that if \( n \) is odd and \( h_q(n) = 0 \), then also \( h_q(2n) = 0 \). As an example, we show that \( h_3(11) =h_3(16) = 1 \). Furthermore, we investigate the relationship between the coverings of \(\mathbb{F}_{q^m}^n\) and \(\mathbb{F}_q^{mn}\), and obtain several sufficient conditions for \(h_{q^m}(n) = 0\). Specifically, we derive that if \(n = 3\) or \(n = 2^d\) (where \(d\) is a nonnegative integer), then \(h_4(n) = 0\).
\end{abstract}

{\bf Keywords:} Finite fields, Cyclically covering subspaces, Cyclic shift, Codimension.

\section{Introduction}
A finite field, denoted as $\mathbb{F}_{q}$, where $q$ is a prime power, is a field that contains a finite number of elements. For $n\in\mathbb{N}$, let $\{e_0, e_1, \dots, e_{n-1}\}$ be the standard basis for $\mathbb{F}^n_q$, the indices of vectors in $\mathbb{F}^n_q$ will be taken modulo $n$ (in particular, we set $e_n = e_0$). Define the cyclic shift operator $\tau : \mathbb{F}^n_q\to \mathbb{F}^n_q$ by
\[\tau: \sum_{i=0}^{n-1}a_ie_i \mapsto \sum_{i=0}^{n-1}a_ie_{i+1}.\]
We say that a subspace $U \subset \mathbb{F}^n_q$ is cyclically covering if $\bigcup_{i=0}^{n-1}\tau^i(U) =\mathbb{F}^n_q$. For any $n\in\mathbb{N}$, let $h_q(n)$ denote the largest possible codimension of a cyclically covering subspace of $\mathbb{F}^n_q$.

For general \(n\), determining the value of \(h_q(n)\) is quite difficult. Relatively few values of \(h_q(n)\) have been determined. Motivated by Isbell's conjecture \cite{Isbell56, Isbell60}, Cameron, Ellis, and Raynaud \cite{Cameron-Ellis-Raynaud} in 2019 studied several properties of \(h_q(n)\), determined some of its special values, and established upper and lower bounds for it. Their main results are summarized as follows.
\begin{lem}\cite{Cameron-Ellis-Raynaud} \label{lem1.1} Let \(q\) be a power of prime \(p\), and \(n,m,d,k \in \mathbb{N}\), then the following hold.

$(\mathrm{i})$ \(h_2(n) \geq 2\), \(n > 3\).

$(\mathrm{ii})$ $h_q(nm) \geq \max\{h_q(n), h_q(m)\}.$

$(\mathrm{iii})$ \(h_q(n) \leq \lfloor\log_q(n)\rfloor\).

$(\mathrm{iv})$ $h_q(q^d - 1) = d - 1 = \lfloor\log_q(q^d - 1)\rfloor.$

$(\mathrm{v})$ \(h_q(M/c) \geq kd + k - c\frac{q^k - 1}{q - 1}\), where \(M = (q - 1)\left(\sum_{r=0}^d q^{kr}\right)\), and \(M\) has a divisor \(c \in \mathbb{N}\) such that \(c < (q - 1)\frac{q^{kd} - q^{-kd}}{q^k - 1}\).

$(\mathrm{vi})$ \(h_q\left(\sum_{r=0}^d q^{kr}\right) = kd\), if \(\gcd(d + 1, q^k - 1) = 1\).

$(\mathrm{vii})$ $h_q(kp^d) = 0$ if \(k \mid q - 1\).

$(\mathrm{viii})$ \(h_2(n) = 0\) if and only if \(n = 2^d\) for some \(d \in \mathbb{N} \cup \{0\}\), and \(h_2(n) = 1\) if and only if \(n = 3\).
\end{lem}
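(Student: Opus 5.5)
Throughout I would identify $\mathbb{F}_q^n$ with $R_n=\mathbb{F}_q[x]/(x^n-1)$, so that $\tau$ is multiplication by $x$. A subspace $U$ of codimension $h$ is the common kernel of $h$ functionals, and $\tau^i(U)$ is the kernel of the same functionals composed with $\tau^{-i}$.

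\emph{Items (ii) and (iii).} For (iii) I would count: the $n$ subspaces $\tau^i(U)$ all contain $0$ and each has $q^{n-h}$ elements. Covering therefore forces $n(q^{n-h}-1)+1\ge q^n$, hence $n>q^h$ and $h\le\lfloor\log_q n\rfloor$. For (ii) I would use the folding map $\phi:\mathbb{F}_q^{nm}\to\mathbb{F}_q^n$, which sums the coordinates in each residue class mod $n$. This map is surjective and satisfies $\phi\circ\tau_{nm}=\tau_n\circ\phi$. Hence if $U$ covers $\mathbb{F}_q^n$, then $\phi^{-1}(U)$ covers $\mathbb{F}_q^{nm}$ and has the same codimension. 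Folding mod $m$ gives the other half of the maximum.

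\emph{Items (iv)--(vi).} These are field constructions. Take $\alpha$ of multiplicative order $n$ in $K=\mathbb{F}_{q^{s}}$, where $\alpha$ generates $K$. Let $\psi(a)=\sum a_i\alpha^i$. Then $\psi:\mathbb{F}_q^n\to K$ is $\mathbb{F}_q$-linear and surjective, and $\psi\circ\tau=\alpha\psi$. For any $\mathbb{F}_q$-subspace $H\subseteq K$, the subspace $\psi^{-1}(H)$ has the same codimension as $H$. It is cyclically covering as soon as every $y\in K^*$ has some $\alpha^iy\in H$.
\begin{itemize}
\item[(iv)] Take $n=q^d-1$ and $\alpha$ primitive. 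Then $\alpha^iy$ runs over all of $K^*$, so any hyperplane $H$ works and $h\ge d-1$. Item (iii) gives the matching upper bound, since $\log_q(q^d-1)<d$.
\item[(vi)] Take $n=\sum_{r=0}^d q^{kr}$, $s=k(d+1)$ and $H=\mathbb{F}_{q^k}$, which has codimension $kd$. Since $n\equiv d+1\pmod{q^k-1}$, the hypothesis $\gcd(d+1,q^k-1)=1$ gives $\langle\alpha\rangle\cdot\mathbb{F}_{q^k}^*=K^*$. So every $y^{-1}$ lies in some $\alpha^i\mathbb{F}_{q^k}^*$, i.e.\ $\alpha^iy\in H$. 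Item (iii) again gives the upper bound.
\item[(v)] This is the same construction with $\alpha$ of order $M/c$. Now $\langle\alpha\rangle\mathbb{F}_{q^k}^*$ has index at most $c$ in $K^*$. I would enlarge $H$ by adjoining one $\mathbb{F}_{q^k}$-line per missed coset. A dimension count on these lines should yield the stated bound; this step needs care with the overlaps.
\end{itemize}

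\emph{Items (vii) and (viii).} It suffices to treat codimension $1$. I would show that $h_q(n)=0$ iff every nonzero ideal of $R_n$ contains a vector with all coordinates nonzero. This comes from rewriting ``$f(\tau^{-i}v)\neq0$ for all $i$'' as ``all coefficients of $g\cdot v'$ are nonzero'', where $g$ represents $f$ and $v'$ is $v$ reversed. Every nonzero ideal contains a minimal one. For $n=kp^d$ with $k\mid q-1$ we have $x^n-1=\prod_{\zeta^k=1}(x-\zeta)^{p^d}$ with all $\zeta\in\mathbb{F}_q$. The minimal ideals are then generated by $(x^n-1)/(x-\zeta)=\sum_j\zeta^{n-1-j}x^j$, whose coefficients are all nonzero; this proves (vii).

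For (viii), $h_2(2^d)=0$ is (vii) with $k=1$. Conversely, if $n$ has an odd prime factor $\ell$, then by (ii) it suffices to show $h_2(\ell)\ge1$. Over $\mathbb{F}_2$, ``all coordinates nonzero'' means the all-ones vector. That vector spans the minimal ideal belonging to the factor $x-1$, and distinct minimal ideals meet trivially. So the minimal ideal attached to any other irreducible factor of $x^\ell-1$ contains no all-ones vector, and $h_2(\ell)\ge1$. Then $h_2(n)=1$ exactly when $n=3$ follows from (iii) for $n=3$ together with (i).

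\emph{Main obstacle: item (i).} For (i) I expect the real difficulty, and I read it as applying to $n>3$ not a power of $2$, since $h_2(4)=0$. Via (ii) one only needs $h_2\ge2$ on a minimal set of bases: $n=6$, $n=9$, and odd primes $\ell\ge5$. For the primes I would first try the $\psi$-construction of (iv)/(vi) with $\mathrm{ord}_\ell(2)$ playing the role of $s$ and a codimension-$2$ subspace $H$ meeting every $\langle\alpha\rangle$-coset. Small leftover cases I would settle by explicit codimension-$2$ subspaces.
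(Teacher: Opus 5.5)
The paper gives no proof of this lemma; it is quoted from Cameron--Ellis--Raynaud. So your proposal has to stand on its own. Your arguments for (ii), (iii), (iv), (vi), (vii) and the first half of (viii) are correct. You are also right that (i) as printed is false at $n=4$ and must be read for $n>3$ not a power of $2$.

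\textbf{The gap in (i).} What you give for (i) is a plan, not a proof. The reduction to $n=6$, $n=9$ and primes $\ell\ge 5$ is correct. Several of these bases already fall to your own constructions:
$5=1+2^2$ and $9=1+2^3$ by (vi), $7=2^3-1$ by (iv), and $6$ by a finite check. For instance, $\langle 111111,101010,100100,010000\rangle$ meets all $14$ shift-orbits of $\mathbb{F}_2^6$.

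For the infinitely many remaining primes ($11,13,19,23,\dots$), nothing in the proposal shows that a codimension-$2$ subspace $H\subseteq\mathbb{F}_{2^s}$ meeting all $(2^s-1)/\ell$ cosets of $\langle\alpha\rangle$ exists. For $\ell=11$ you would need an $8$-dimensional $H\subseteq\mathbb{F}_{2^{10}}$ whose $255$ nonzero elements hit all $93$ cosets. A cardinality count only shows this is not ruled out, and ``explicit subspaces for small cases'' cannot handle an infinite family. That existence statement is the whole content of (i). Since you derive ``$h_2(n)=1$ only for $n=3$'' in (viii) from (i), that half of (viii) is unproved as well.

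\textbf{A false step in (v).} The index of $\langle\alpha\rangle\mathbb{F}_{q^k}^*$ in $K^*$ is $c\gcd(M/c,q^k-1)/(q-1)$, and this can exceed $c$. Take $q=2$, $k=d=2$, $c=1$, so $n=M=21$ and the hypothesis on $c$ holds. The subgroup of order $21$ in $\mathbb{F}_{64}^*$ already contains $\mathbb{F}_4^*$, so the index is $3$. Your count then gives only $\dim_{\mathbb{F}_2}H\le 3k=6=[K:\mathbb{F}_2]$, which is nothing, whereas (v) claims $h_2(21)\ge 3$.

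The repair is simpler than your approach: drop $\mathbb{F}_{q^k}$ and let $H$ be the $\mathbb{F}_q$-span of one representative from each coset of $\langle\alpha\rangle$ itself. Since $|K^*|=q^{k(d+1)}-1=M\frac{q^k-1}{q-1}$, there are exactly $c\frac{q^k-1}{q-1}$ cosets. Hence $\operatorname{codim}H\ge kd+k-c\frac{q^k-1}{q-1}$, which is precisely the stated bound.

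Your assertion that $\alpha$ generates $K$ is unjustified here, but it does not matter. If $\mathbb{F}_q(\alpha)=\mathbb{F}_{q^e}$ is a proper subfield, run the same argument in $\mathbb{F}_{q^e}$. The resulting bound $e-(q^e-1)/n$ is at least $k(d+1)-(q^{k(d+1)}-1)/n$ because $n\le q^e-1$.
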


In 1991, Cameron (see \cite{Cameron} Problem 190) posed the following problem in an equivalent form:
\begin{prob}\cite{Cameron}\label{prob1.1}
Does $h_2(n)\to\infty$ as $n\to\infty$ over the odd integers or is $h_2(n) =2$ for infinitely many odd $n$?
\end{prob}
Motivated by Problem \ref{prob1.1}, in 2019, Aaronson, Groenland and Johnston \cite{Aaronson-Groenland-Johnston} investigated the cyclically covering subspaces of \(\mathbb{F}_2^n\). Their main conclusions are summarized as follows.
\begin{lem}\cite{Aaronson-Groenland-Johnston} \label{lem1.2} Let \(q\) be a power of prime \(p\), and \(n,m,\ell \in \mathbb{N}\), then the following hold.

$(\mathrm{i})$ \(h_q(mn) \geq h_q(m) + h_q(n)\).

$(\mathrm{ii})$ \(h_q(pn) \leq ph_q(n)\).

$(\mathrm{iii})$ \(h_q(\ell p^d) = 0\) for any \(\ell < q\).

$(\mathrm{iv})$ \(h_2(t) = 2\) if \(t > 3\) is a prime for which \(2\) is a primitive root.

$(\mathrm{v})$ \(h_q(t) = 0\), where \(q\) is an odd prime, and \(t > q\) is a prime with \(q\) as a primitive root.
\end{lem}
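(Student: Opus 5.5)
Lemma~\ref{lem1.2} collects five separate facts, so I would prove each part separately. The useful general principle is this: if $K\subseteq\mathbb{F}_q^n$ is $\tau$-invariant and $W$ is cyclically covering, then $W\cap K$ is cyclically covering in $K$. Indeed, if $x\in K$ and $x=\tau^s w$ with $w\in W$, then $w=\tau^{-s}x\in K$.

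\emph{Part (ii).} Since $\mathrm{char}\,\mathbb{F}_q=p$, we have $\tau^{pn}-1=(\tau^n-1)^p$ on $\mathbb{F}_q^{pn}$. Set $K_j=\ker(\tau^n-1)^j$ for $0\le j\le p$, a $\tau$-invariant filtration from $0$ to $\mathbb{F}_q^{pn}$. Multiplication by $(\tau^n-1)^{j-1}$ gives a $\tau$-equivariant isomorphism $K_j/K_{j-1}\cong K_1$, and $K_1=\ker(\tau^n-1)$ is isomorphic to $\mathbb{F}_q^n$ with its cyclic shift via the repetition map $y\mapsto(y,\dots,y)$. Given a covering $W$ of codimension $c$, apply the principle to each $K_j$. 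It shows that the image of $W\cap K_j$ in $K_j/K_{j-1}\cong\mathbb{F}_q^n$ is cyclically covering, so it has codimension at most $h_q(n)$. The codimension of $W$ is the sum of these $p$ layer codimensions, hence $c\le p\,h_q(n)$.

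\emph{Part (iii).} Iterating (ii) gives $h_q(\ell p^d)\le p^d h_q(\ell)$. Lemma~\ref{lem1.1}(iii) gives $h_q(\ell)\le\lfloor\log_q\ell\rfloor=0$ when $\ell<q$, so $h_q(\ell p^d)=0$.

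\emph{Part (i).} Take coverings $U\subseteq\mathbb{F}_q^m$ and $V\subseteq\mathbb{F}_q^n$ of codimensions $a=h_q(m)$ and $b=h_q(n)$. Define $\varphi:\mathbb{F}_q^{mn}\to\mathbb{F}_q^m$ by summing the coordinates in each residue class mod $m$. Then $\varphi$ commutes with the shifts and is unchanged by $\tau^m$. Define $\psi$ as the restriction to one residue class, which $\tau^m$ shifts cyclically as a copy of $\mathbb{F}_q^n$. Given $x$, first choose $s$ with $\varphi(\tau^s x)\in U$, then choose $k$ with $\psi(\tau^{s+km}x)\in V$. Hence $\varphi^{-1}(U)\cap\psi^{-1}(V)$ is covering.

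The delicate point is whether the codimensions add, since $(\varphi,\psi)$ need not be surjective: one coordinate of $\varphi$ is the sum of the coordinates seen by $\psi$. I would first try to fix this by applying $\psi$ to a residue class whose sum $\varphi$ does not record. That is, I would replace $\varphi$ by the map recording class sums for the classes $r\neq 0$ together with a compatible functional, or else twist the choice of $U$ and $V$ so that the two conditions become independent. Making this independence work in general is the step I expect to need care.

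\emph{Parts (iv) and (v).} When $q$ is a primitive root mod the prime $t$, $x^t-1=(x-1)\Phi_t(x)$ with $\Phi_t$ irreducible over $\mathbb{F}_q$. So the only $\tau$-invariant subspaces are $0$, $\langle\mathbf 1\rangle$, the zero-sum hyperplane $H$, and the whole space. For (v), a hyperplane $\ker f$ with $f(x)=\sum c_ix_i$ covers exactly when every $x$ has some cyclic correlation $\sum_i c_ix_{i+s}$ equal to $0$. I would exhibit, for every $c\neq0$, an $x$ whose correlation vector with $c$ has no zero entry. In the group ring $\mathbb{F}_q[x]/(x^t-1)\cong\mathbb{F}_q\times\mathbb{F}_{q^{t-1}}$ this means finding $x$ with $c\cdot\bar x$ of full weight. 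That should follow from a counting argument, since $t>q$ and $q$ odd leave room for a nonvanishing choice.

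For (iv), the lower bound $h_2(t)\ge2$ is Lemma~\ref{lem1.1}(i). The upper bound $h_2(t)\le 2$ requires ruling out coverings of codimension $3$. I would apply the invariant-subspace principle to $H$ and use irreducibility to bound how a codimension-$3$ subspace can meet the shifts of a generic vector. This case analysis, together with the nonvanishing count in (v), is the main obstacle.
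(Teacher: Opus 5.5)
The paper gives no proof of this lemma; it is quoted from Aaronson--Groenland--Johnston. Your proposal therefore has to stand on its own.

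\textbf{Parts (i), (ii), (iii), (v).} Parts (ii) and (iii) are correct. With $y=x^n-1$, the kernel of $y^j$ on $\mathbb{F}_q[x]/(y^p)$ is $y^{p-j}$ times the ring, so $\dim K_j=jn$. Hence the maps induced by $(\tau^n-1)^{j-1}$ really are equivariant isomorphisms $K_j/K_{j-1}\to K_1$, and your layer-by-layer codimension count is sound.

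In (i) your construction works as it stands, and the point you flagged is not an obstacle. The map $(\varphi,\psi)$ is onto the hyperplane $\Pi=\{(u,v):u_0=\sum_k v_k\}$ of $\mathbb{F}_q^m\times\mathbb{F}_q^n$. Therefore $\operatorname{codim}\bigl(\varphi^{-1}(U)\cap\psi^{-1}(V)\bigr)=\operatorname{codim}_\Pi\bigl((U\times V)\cap\Pi\bigr)$, and this equals $a+b$ unless $U\times V\subseteq\Pi$. That containment would force $V\subseteq\{\sum_k v_k=0\}$. This hyperplane is shift-invariant, so it contains no covering subspace. No twisting of $U$, $V$ or of the maps is needed.

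In (v), replace the vague ``counting argument'' with the structure you already wrote down. The set $\{c\cdot\bar x\}$ is the principal ideal $(c)$, a nonzero $\tau$-invariant subspace, so it is $\langle\mathbf 1\rangle$, $H$, or the whole space. Each of these contains a vector with no zero coordinate because $q$ is odd: for $H$ take $(1,-1,\dots,1,-1)$ or $(1,1,-2,1,-1,\dots,1,-1)$. Hence no hyperplane covers. The hypothesis $t>q$ only serves to guarantee $t\neq p$.

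\textbf{Part (iv): the genuine gap.} The real gap is the upper bound $h_2(t)\le 2$, for which you give only a statement of intent. This bound is the main theorem of Aaronson--Groenland--Johnston, and the tools you have assembled do not reach it.

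Since $\mathbf 1\in W$ for every covering $W$, and $\mathbb{F}_2^t=\langle\mathbf 1\rangle\oplus H$ for odd $t$, you get $W=\langle\mathbf 1\rangle\oplus(W\cap H)$. The task therefore becomes: show that no $\mathbb{F}_2$-subspace $S$ of codimension $3$ in $H\cong\mathbb{F}_{2^{t-1}}$ satisfies $\bigcup_i\omega^iS=\mathbb{F}_{2^{t-1}}$, where $\omega$ is a primitive $t$-th root of unity.

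Orbit counting, i.e.\ the bound of Lemma~\ref{lem1.1}(iii), excludes this only for $t=5$. Already for $t=11$ one has $\lfloor\log_2 11\rfloor=3$ and $1+11(2^7-1)>2^{10}$. Moreover, a random $y$ lies in $t/8>1$ of the translates $\omega^iS$ on average, so no first-moment argument yields an uncovered vector either. A genuinely structural argument is required, exploiting the field $\mathbb{F}_{2^{t-1}}$ and the fact that $2$ is a primitive root modulo $t$. That is the missing idea.

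For the lower bound, cite Lemma~\ref{lem1.1}(viii) rather than (i). As quoted, (i) fails for powers of $2$, whereas (viii) gives $h_2(t)\notin\{0,1\}$ for odd primes $t>3$.
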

Based on conclusions $(\mathrm{i})$ and $(\mathrm{iv})$ of Lemma \ref{lem1.2}, a positive answer to Problem \ref{prob1.1} can be given provided that Artin's conjecture holds true. Artin conjectured that $2$ is a primitive root modulo infinitely many primes. More generally, for any non-square positive integer \(n\), there are infinitely many primes \(p\) for which \(n\) is a primitive root modulo \(p\). Widely believed, the conjecture follows from the generalized Riemann hypothesis (Hooley \cite{Hooley}). Though no \(n\) is known to satisfy it, Heath-Brown \cite{Heath-Brown} proved it holds for at least one of \(\{2, 3, 5\}\).

Furthermore, Aaronson, Groenland and Johnston \cite{Aaronson-Groenland-Johnston} posed several interesting and challenging problems, such as:
\begin{prob}\cite{Aaronson-Groenland-Johnston}\label{prob1.2}
For which \( n \in \mathbb{N} \) is \( h_q(n) = 0 \)?
\end{prob}

In 2024, Huang \cite{Huang} obtained a necessary and sufficient condition for \(h_q(n) = 0\) when \(\gcd(q, n) = 1\).
His main result shows that this problem can be fully reduced to computing the values of the trace function over finite fields.
He also derived the following conclusions.
\begin{lem}\cite{Huang}\label{lem1.3} Let \( p \) be an odd prime, and let \( q \) be a power of \( p \). For any non-negative integer \( d \), the following statements hold:

$(\mathrm{i})$ \( h_q\left(p^d(q + 1)\right) = 0 \).

$(\mathrm{ii})$ \( h_q\left(2p^d(q - 1)\right) = 0 \) if \(4 \mid q + 1 \).

$(\mathrm{iii})$ Let \( \ell \) be an odd prime number such that \( q \) is a primitive root modulo \( 2\ell \). If \( q \) is relatively prime to \( \ell - 1 \), then \( h_q\left(2p^d\ell\right) = 0 \).
\end{lem}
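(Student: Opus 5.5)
The plan is to turn the condition $h_q(n)=0$ into a statement about cyclic codes and then use the trace description of minimal cyclic codes.

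\textbf{Reformulation.} Note first that $h_q(n)=0$ means that no hyperplane of $\mathbb{F}_q^n$ is cyclically covering. Write a hyperplane as $U=a^{\perp}$ with $a\neq 0$. Then $x\in\bigcup_i\tau^i(U)$ if and only if some cyclic correlation $\langle a,\tau^{-i}x\rangle$ vanishes. In the ring $R_n=\mathbb{F}_q[z]/(z^n-1)$ the correlations of $a$ and $x$ are the coefficients of $a^*(z)x(z)$, where $a^*$ is the reciprocal of $a$. Hence $h_q(n)=0$ if and only if every nonzero ideal of $R_n$ contains a codeword all of whose coordinates are nonzero. Every nonzero ideal contains a minimal ideal, so it suffices to check this for minimal cyclic codes.

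\textbf{Removing the factor $p^d$.} Write $n=p^dm$ with $p\nmid m$, so that $z^n-1=(z^m-1)^{p^d}$. The minimal ideals of $R_n$ are $(z^m-1)^{p^d-1}I$, where $I$ runs over the minimal ideals of $R_m$ (lifted). The coefficients of $(z^m-1)^{p^d-1}$ are $\pm\binom{p^d-1}{j}$, and these are all nonzero mod $p$ by Lucas' theorem. Therefore a codeword of the lifted code is a concatenation of nonzero multiples of a word $c$ of $I$, and it has full weight exactly when $c$ does. This reduces all three parts to $d=0$, with $\gcd(m,q)=1$.

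\textbf{Trace criterion.} For $\gcd(m,q)=1$, each minimal code corresponds to a root $\alpha$ of $z^m-1$ in some $\mathbb{F}_{q^k}$. Its words are $c_i=\mathrm{Tr}_{q^k/q}(\beta\alpha^i)$ with $\beta\in\mathbb{F}_{q^k}$. So the task is: for every such $\alpha$, find $\beta$ with $\mathrm{Tr}(\beta\alpha^i)\neq0$ for all $i$. When $\alpha\in\mathbb{F}_q$, the choice $\beta=1$ works.

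\textbf{(i)} Take $m=q+1$ and $\alpha\in\mathbb{F}_{q^2}\setminus\mathbb{F}_q$ with $\alpha^{q+1}=1$. Then $\mathrm{Tr}(\beta\alpha^i)=\beta\alpha^i+\beta^q\alpha^{-i}$, which vanishes if and only if $\beta^{q-1}=-\alpha^{2i}$. The elements $\beta^{q-1}$ range over all $q+1$ norm-one elements. Since $q$ is odd, the set $\{-\gamma^2:\gamma^{q+1}=1\}$ has only $(q+1)/2$ elements. Choosing $\beta^{q-1}$ outside this set gives a full-weight word.

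\textbf{(ii)} Take $m=2(q-1)$ and $\alpha^{q-1}=-1$. Then $\alpha\notin\mathbb{F}_q$, $\alpha^2\in\mathbb{F}_q$, $\alpha\in\mathbb{F}_{q^2}$ and $\alpha^q=-\alpha$. This gives $\mathrm{Tr}(\beta\alpha^i)=\alpha^i(\beta+(-1)^i\beta^q)$. Taking $\beta=1+\alpha$, we get $\beta+\beta^q=2$ and $\beta-\beta^q=2\alpha$, both nonzero since $q$ is odd. My sketch does not seem to use the hypothesis $4\mid q+1$. I would recheck this case carefully, in particular whether all roots of $z^{2(q-1)}-1$ are covered, since this is where I expect the stated hypothesis to enter if it is needed at all.

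\textbf{(iii)} Take $m=2\ell$. Because $q$ is a primitive root mod $2\ell$, $z^{2\ell}-1=(z-1)(z+1)\Phi_\ell\Phi_{2\ell}$, and both cyclotomic factors are irreducible of degree $\ell-1$. Take $\beta=1$ in $\mathbb{F}_{q^{\ell-1}}$. For $\zeta$ of order $\ell$ or $2\ell$, write $\zeta=\pm\omega$ with $\omega$ of order $\ell$. Then $\mathrm{Tr}(\zeta^i)=\pm\mathrm{Tr}(\omega^i)$, which equals $\pm1$ when $\ell\nmid i$ and $\pm(\ell-1)$ when $\ell\mid i$. The hypothesis $\gcd(q,\ell-1)=1$ makes $\ell-1$ nonzero mod $p$, so every coordinate is nonzero.

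\textbf{Main obstacle.} I expect the hardest step to be the clean justification of the reduction from $p^dm$ to $m$, namely the description of minimal ideals in the repeated-root case. The case analyses (i)–(iii) are then short computations.
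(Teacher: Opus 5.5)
Your proof is correct and complete. The paper itself gives no proof of this lemma; it only quotes it from Huang, so there is nothing in the text to match line by line. What you have written is essentially the trace-function route the introduction attributes to Huang, recast in cyclic-code language.

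Relative to the paper's own machinery:
\begin{itemize}
\item Your reduction ``every nonzero ideal contains a minimal ideal'' plays the role of Theorem~\ref{lem4.1} and its Remark, used in the direction the paper actually needs: it suffices to treat each summand $\mathbb{F}_q[x]/(f)$.
\item Your parametrization $c_i=\mathrm{Tr}(\beta\alpha^i)$ of a minimal code is the dual of Theorem~\ref{thm3.2}. There, hyperplanes of $\mathbb{F}_q[\omega]$ are written as $\ker \mathrm{Tr}(\alpha\,\cdot)$ and tested against the coefficient vectors of $\omega^i$.
\item Working on the code side lets you handle the repeated-root summands $\mathbb{F}_q[x]/(f^{p^d})$ directly. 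Theorem~\ref{thm3.2} only covers irreducible $f$, which is why the paper relies on Lemma~\ref{lem1.4} for the $p$-part.
\end{itemize}

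Your $p^d$ step is easier than you fear. In characteristic $p$ one has $(z^m-1)^{p^d-1}=(z^{mp^d}-1)/(z^m-1)=\sum_{j=0}^{p^d-1}z^{mj}$, so the lifted word is literally $p^d$ copies of $c$ and Lucas' theorem is not needed. Alternatively, Lemma~\ref{lem1.2}(ii), $h_q(pn)\le p\,h_q(n)$, gives $h_q(p^dm)=0$ from $h_q(m)=0$ by induction.

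On (ii), your doubt is unfounded. Every root of $z^{2(q-1)}-1$ satisfies $\alpha^{q-1}=\pm1$, and you handled both cases. So you have proved $h_q(2p^d(q-1))=0$ for every odd $q$, and the hypothesis $4\mid q+1$ is not needed for the statement as written. For example, take $q=5$, $n=8$, where $4\nmid 6$. The minimal code generated by $(z^8-1)/(z^2-2)=z^6+2z^4+4z^2+3$ is $\{(3a,3b,4a,4b,2a,2b,a,b)\}$, which has full-weight words.

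A mod-$4$ condition does genuinely appear at length $2(q+1)$. For a root $\alpha$ of order $2(q+1)$, your computation from (i) becomes $\beta^{q-1}=(-1)^{i+1}\alpha^{2i}$. These values exhaust $\{\gamma:\gamma^{q+1}=1\}$ exactly when $4\mid q+1$, which is consistent with $h_3(8)=1$.

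In (iii), the traces are exactly $-1$ for $\ell\nmid i$ and $\ell-1$ for $\ell\mid i$ (up to the sign $(-1)^i$ for order $2\ell$). Your ``$\pm$'' is harmless.
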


In 2025, Sun, Ma and Zeng \cite{Sun-Ma-Zeng} investigated the cyclically covering subspaces of the finite field \(\mathbb{F}_{q^n}\) and determined the value of \(h_2(n)\) for certain special values of \(n\). In particular, they showed that \(h_2(21) = 4\). Finally, they established several lower bounds for \(h_q(n)\) in the case where \(\gcd(q, n) = 1\).

In addition, Li and Yuan \cite{Li-Yuan} proved that the problem of determining \(h_q(n) = 0\) can be reduced to the case where \(\gcd(q, n) = 1\). Specifically, they established the following result.
\begin{lem}\cite{Li-Yuan}\label{lem1.4} 
Let \( q \) be a power of a prime $p$, and let \( n \) be a positive integer satisfying \( \gcd(p, n) = 1 \). Then for any non-negative integer \( k \), we have \( h_q(np^k) = 0 \) if and only if \( h_q(n) = 0 \).
\end{lem}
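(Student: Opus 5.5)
Both directions go through the lower bound $h_q(mn)\ge h_q(m)+h_q(n)$ of Lemma~\ref{lem1.2}(i) and the identity $h_q(p^k)=0$ from Lemma~\ref{lem1.2}(iii) (take $\ell=1$). For the direction $h_q(np^k)=0\Rightarrow h_q(n)=0$, we have $0=h_q(np^k)\ge h_q(n)+h_q(p^k)\ge h_q(n)\ge 0$. Equivalently, Lemma~\ref{lem1.1}(ii) gives $h_q(np^k)\ge h_q(n)$ directly. So the real content is the converse: if $\mathbb{F}_q^n$ admits no proper cyclically covering subspace, the same holds for $\mathbb{F}_q^{np^k}$.

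For the converse, I would induct on $k$. It suffices to show that $h_q(m)=0$ with $p\nmid m$ implies $h_q(mp^{j})=0$ for all $j$. I would not go one step at a time via Lemma~\ref{lem1.2}(ii): $h_q(pN)\le p\,h_q(N)$ gives $h_q(pN)=0$ whenever $h_q(N)=0$, and this holds for every $N$, not only those coprime to $p$. Iterating from $N=n$ gives $h_q(np)=0$, then $h_q(np^2)=0$, and so on up to $h_q(np^k)=0$. The hypothesis $\gcd(p,n)=1$ is then not even needed; it only makes the statement a genuine reduction to the coprime case.

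So the proof is two lines: the lower bound from Lemma~\ref{lem1.1}(ii), and the upper bound from iterating Lemma~\ref{lem1.2}(ii) $k$ times, giving $h_q(np^k)\le p^k h_q(n)$. If I wanted the argument self-contained rather than quoting Lemma~\ref{lem1.2}(ii), the hard step would be reproving $h_q(pN)\le p\,h_q(N)$. For that I would identify $\mathbb{F}_q^{pN}$ with $\mathbb{F}_q[x]/(x^{pN}-1)$, where $x^{pN}-1=(x^N-1)^p$. Then, given a cyclically covering $U$ of codimension $c$, I would use the decomposition of vectors into $p$ interleaved subsequences (indices modulo $p$) to pass to cyclic covering in $\mathbb{F}_q^N$. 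Since the result is quoted as an earlier lemma, I would simply cite it.
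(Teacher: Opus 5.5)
The paper gives no proof of this lemma; it is imported from Li--Yuan, so there is no in-paper argument to compare with. Your derivation is correct. It also shows that the lemma follows immediately from two results the paper already quotes. Lemma~\ref{lem1.1}(ii) gives $h_q(np^k)\ge h_q(n)$, and iterating Lemma~\ref{lem1.2}(ii) $k$ times gives $h_q(np^k)\le p^k h_q(n)$. As you note, the hypothesis $\gcd(p,n)=1$ plays no role. Two small points. The phrase ``I would not go one step at a time'' presumably should read ``now''. In the first direction you only use $h_q(p^k)\ge 0$, not $h_q(p^k)=0$.

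Your aside on reproving $h_q(pN)\le p\,h_q(N)$ points at the wrong structure.
\begin{itemize}
\item Splitting coordinates into $p$ interleaved subsequences makes sense for any multiplier $m$ and does not use the characteristic. But $h_q(mN)\le m\,h_q(N)$ is false in general: $h_2(3)=1>3\,h_2(1)=0$.
\item Write $B_r$ for the vectors supported on indices $\equiv r\pmod p$. Covering $B_0$ then only shows that $B_0$ is the union of the $\tau^p$-shifts of the $p$ different subspaces $\tau^{-r}(U\cap B_r)$. You do not get a cyclic covering by shifts of a single subspace.
\end{itemize}

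The identity $x^{pN}-1=(x^N-1)^p$ is used instead through the $\tau$-invariant filtration $K_j=\ker(\tau^N-1)^j$, $0\le j\le p$. Its key properties are:
\begin{itemize}
\item $\dim K_j=jN$;
\item $\tau^N$ acts trivially on $K_j/K_{j-1}$;
\item $(\tau^N-1)^{j-1}$ identifies $K_j/K_{j-1}$ with $K_1$, the $N$-periodic vectors, i.e.\ with $\mathbb{F}_q^N$ under the cyclic shift.
\end{itemize}
Now let $U$ be cyclically covering. Each $(U\cap K_j+K_{j-1})/K_{j-1}$ is cyclically covering in $K_j/K_{j-1}$: lift an element, shift it into $U$, and use $\tau K_j=K_j$. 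Moreover $\dim U=\sum_{j}\dim\bigl((U\cap K_j+K_{j-1})/K_{j-1}\bigr)$, so the codimensions add and $\mathrm{codim}\,U\le p\,h_q(N)$.
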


We also mention a problem related to covering vector spaces over \(\mathbb{F}_q\). Luh \cite{Luh} proved that any vector space \(V\) over a finite field \(\mathbb{F}_q\) can be expressed as the union of \(|\mathbb{F}_q| + 1\) proper subspaces, and such a collection of subspaces is unique up to automorphisms of \(V\).

In this paper, we further investigate the problem of determining \(h_q(n) = 0\) via several isomorphisms of vector spaces. We provide necessary and sufficient conditions under which \( h_q(n) = 0 \) holds. As an application, we show that \( h_q(\ell^t) = 0 \) whenever \( q \) is a primitive root modulo \( \ell^t \). Moreover, we prove that if \( n \) is odd and \( h_q(n) = 0 \), then also \( h_q(2n) = 0 \). Finally, we compute an explicit example, showing that \( h_3(11)=h_3(16) = 1 \).

\section{Preliminaries}
In this section, we establish several isomorphisms between vector spaces over \(\mathbb{F}_q\). Some of these results can be found in \cite{Sun-Ma-Zeng}; however, for the sake of completeness, we restate them here.

Let us recall the basic notation. For $n\in\mathbb{N}$, let $\{e_0, e_1, \dots, e_{n-1}\}$ be the standard basis for $\mathbb{F}^n_q$, the indices of vectors in $\mathbb{F}^n_q$ will be taken modulo $n$ (in particular, we set $e_n = e_0$). Define the cyclic shift operator $\tau : \mathbb{F}^n_q\to \mathbb{F}^n_q$ by
\[\tau: \sum_{i=0}^{n-1}a_ie_i \mapsto \sum_{i=0}^{n-1}a_ie_{i+1}.\]
We say that a subspace $U \subset \mathbb{F}^n_q$ is cyclically covering if $\bigcup_{i=0}^{n-1}\tau^i(U) =\mathbb{F}^n_q$.

{\bf Isomorphism I}. First, we establish an isomorphism between \(\mathbb{F}_q^n\) and \(\mathbb{F}_{q^n}\). Let \(\{\gamma, \gamma^q, \ldots, \gamma^{q^{n-1}}\}\) be a normal basis of \(\mathbb{F}_{q^n}\) over \(\mathbb{F}_q\). Then, the isomorphism $\varphi: \mathbb{F}_q^n\to \mathbb{F}_{q^n}$ is given by
\[\varphi: \sum_{i=0}^{n-1}a_ie_i \mapsto \sum_{i=0}^{n-1}a_i\gamma^{q^{i}}.\]
Let \(\sigma\) be the Frobenius automorphism of \(\mathbb{F}_{q^n}\), i.e., \(\sigma(\alpha) = \alpha^q\) for all \(\alpha \in \mathbb{F}_{q^n}\). Thus, we have \(\sigma \circ \varphi = \varphi \circ \tau\), i.e., the following commutative diagram commutes:
\[\begin{tikzcd}[row sep=2.2em, column sep=3.5em, arrows=-stealth]
\mathbb{F}_q^n 
\arrow[r, "\tau"] 
\arrow[d, "\varphi"']
& \mathbb{F}_q^n
\arrow[d, "\varphi"'] \\
\mathbb{F}_{q^n}
\arrow[r, "\sigma"]
& \mathbb{F}_{q^n}
\end{tikzcd}\]	
This implies that
\[\mathbb{F}_{q^n}=\varphi\left(\mathbb{F}_q^n\right)=\varphi\left(\bigcup_{i=0}^{n-1}\tau^i(U)\right)=\bigcup_{i=0}^{n-1}\varphi\left(\tau^i(U)\right)=\bigcup_{i=0}^{n-1}\sigma^i\left(\varphi(U)\right).\]
Therefore, \(U\) is a cyclically covering subspace of \(\mathbb{F}_q^n\) if and only if \(\varphi(U)\) is a cyclically covering subspace of \(\mathbb{F}_{q^n}\). Furthermore, assuming \(\tau(U) \subset U\), we obtain 
\[\sigma\circ\varphi(U)=\varphi\circ\tau(U)\subset\varphi(U).\]
This means that \(U\) is a \(\tau\)-invariant subspace of \(\mathbb{F}_q^n\) if and only if \(\varphi(U)\) is a \(\sigma\)-invariant subspace of \(\mathbb{F}_{q^n}\).

{\bf Isomorphism II}. Let $\mathbb{F}_q[x]$ be the ring of polynomials in a single indeterminate $x$ over $\mathbb{F}_q$. We consider the \(n\)-dimensional vector space \(\mathbb{F}_q[x]/(x^n - 1)\) over \(\mathbb{F}_q\). The isomorphism $\rho: \mathbb{F}_q^n\to \mathbb{F}_q[x]/(x^n - 1)$ is given by
\[\rho: \sum_{i=0}^{n-1}a_ie_i \mapsto \sum_{i=0}^{n-1}a_ix^{i}+(x^n-1).\]
Consider the map \(\eta: \mathbb{F}_q[x]/(x^n - 1) \to \mathbb{F}_q[x]/(x^n - 1)\) defined by
\[\eta : f(x) + (x^n - 1) \mapsto x f(x) + (x^n - 1).\]
Then, we have \(\eta \circ \rho = \rho \circ \tau\), i.e., the following commutative diagram commutes:
\[\begin{tikzcd}[row sep=2.2em, column sep=3.5em, arrows=-stealth]
\mathbb{F}_q^n 
\arrow[r, "\tau"] 
\arrow[d, "\rho"']
& \mathbb{F}_q^n
\arrow[d, "\rho"'] \\
\mathbb{F}_q[x]/(x^n - 1)
\arrow[r, "\eta"]
& \mathbb{F}_q[x]/(x^n - 1)
\end{tikzcd}\]		
This implies that
\[\mathbb{F}_q[x]/(x^n-1)=\rho\left(\mathbb{F}_q^n\right)=\rho\left(\bigcup_{i=0}^{n-1}\tau^i(U)\right)=\bigcup_{i=0}^{n-1}\rho\left(\tau^i(U)\right)=\bigcup_{i=0}^{n-1}\eta^i\left(\rho(U)\right).\]
Therefore, \(U\) is a cyclically covering subspace of \(\mathbb{F}_q^n\) if and only if \(\rho(U)\) is a cyclically covering subspace of \(\mathbb{F}_q[x]/(x^n - 1)\). Furthermore, assuming \(\tau(U) \subset U\), we obtain 
\[\eta\circ\rho(U)=\rho\circ\tau(U)\subset\rho(U).\]
This means that \(U\) is a \(\tau\)-invariant subspace of \(\mathbb{F}_q^n\) if and only if \(\rho(U)\) is a \(\eta\)-invariant subspace of \(\mathbb{F}_q[x]/(x^n - 1)\).

{\bf Isomorphism III}. Suppose that \(x^n - 1\) factors over \(\mathbb{F}_q\) as
\[x^n - 1 = f_1(x)f_2(x)\cdots f_r(x),\]
where \(f_i(x)\) for \(1 \leq i \leq r\) are powers of irreducible polynomials. By the Chinese Remainder Theorem, the isomorphism $\delta: \mathbb{F}_q[x]/(x^n - 1)\to \bigoplus_{i=1}^r \mathbb{F}_q[x]/(f_i(x))$ is given by
\[\delta: g(x) + (x^n - 1) \mapsto \left(g(x) + (f_1(x)), g(x) + (f_2(x)), \cdots, g(x) + (f_r(x))\right).\]
Consider the map $\eta_1: \bigoplus\limits_{i=1}^r \mathbb{F}_q[x]/(f_i(x)) \to \bigoplus\limits_{i=1}^r \mathbb{F}_q[x]/(f_i(x))$ defined by
\[\eta_1: \left(g_1(x)+(f_1(x)), \cdots, g_r(x)+(f_r(x))\right) \mapsto \left(xg_1(x)+(f_1(x)), \cdots, xg_r(x)+(f_r(x))\right).\]
Then, we have \(\eta_1 \circ \delta = \delta \circ \eta\), i.e., the following commutative diagram commutes:
\[\begin{tikzcd}[row sep=2.2em, column sep=3.5em, arrows=-stealth]
\mathbb{F}_{q}[x]/(x^n-1) 
\arrow[r, "\eta"] 
\arrow[d, "\delta"']
& \mathbb{F}_{q}[x]/(x^n-1)
\arrow[d, "\delta"'] \\
\bigoplus\limits_{i=1}^r \mathbb{F}_q[x]/(f_i(x))
\arrow[r, "\eta_1"]
& \bigoplus\limits_{i=1}^r \mathbb{F}_q[x]/(f_i(x))
\end{tikzcd}\]
This implies that
\[\bigoplus\limits_{i=1}^r \mathbb{F}_q[x]/(f_i(x))=\delta\left(\mathbb{F}_{q}[x]/(x^n-1)\right)=\delta\left(\bigcup_{i=0}^{n-1}\eta^i(V)\right)=\bigcup_{i=0}^{n-1}\delta\left(\eta^i(V)\right)
=\bigcup_{i=0}^{n-1}\eta_1^i\left(\delta(V)\right).\]
Therefore, \(V\) is a cyclically covering subspace of \(\mathbb{F}_{q}[x]/(x^n-1)\) if and only if \(\delta(V)\) is a cyclically covering subspace of \(\bigoplus\limits_{i=1}^r \mathbb{F}_q[x]/(f_i(x))\). Furthermore, assuming \(\eta(V) \subset V\), we obtain 
\[\eta_1\circ\delta(V)=\delta\circ\eta(V)\subset\delta(V).\]
This means that \(V\) is a \(\eta\)-invariant subspace of \(\mathbb{F}_{q}[x]/(x^n-1)\) if and only if \(\delta(V)\) is a \(\eta_1\)-invariant subspace of \(\bigoplus\limits_{i=1}^r \mathbb{F}_q[x]/(f_i(x))\).

{\bf Isomorphism IV}. Suppose that \(f(x)\) is a degree \(d\) irreducible polynomial over \(\mathbb{F}_q\), and let \(f(x)\) be the minimal polynomial of \(\omega\) over \(\mathbb{F}_q\). The following commutative diagram commutes:
\[\begin{tikzcd}[row sep=2.2em, column sep=3.5em, arrows=-stealth]
\mathbb{F}_q[x]/(f(x)) 
\arrow[r, "\mu"] 
\arrow[d, "\eta_2"']
& \mathbb{F}_{q}[\omega] \cong \mathbb{F}_{q^{d}} 
\arrow[d, "\eta_3"', shift left=-3ex] \\
\mathbb{F}_q[x]/(f(x)) 
\arrow[r, "\mu"]
& \mathbb{F}_{q}[\omega] \cong \mathbb{F}_{q^{d}}
\end{tikzcd}\]
where
\begin{equation*}
\begin{split}
\mu:&~\mathbb{F}_{q}[x]/(f(x))\to \mathbb{F}_{q}[\omega], \quad \mu(g(x)+(f(x)))=g(\omega);\\
\eta_2:&~\mathbb{F}_q[x]/(f(x))\to\mathbb{F}_q[x]/(f(x)), \quad \eta_2(g(x)+(f(x)))=xg(x)+(f(x));\\
\eta_3:&~\mathbb{F}_{q}[\omega]\to\mathbb{F}_{q}[\omega], \quad \eta_3(g(\omega))=\omega g(\omega).
\end{split}
\end{equation*}

The proofs of our results rely on the isomorphisms between the several vector spaces given above, which allow us to transform the problem among these vector spaces.

\section{The equivalent conditions for $h_q(n)=0$}
Throughout this section, all notations shall be consistent with those introduced in Section 2. First, we consider general linear transformations over a finite field. Let $\mathbb{F}$ be a finite field and let $V$ be a vector space over $\mathbb{F}$. Let $\tau\in GL(V)$ and its order is $t$, i.e., $\tau$ is a linear isomorphism of order $t$, we say that a subspace $U\subset V$ is $\tau$-covering if
\[\bigcup_{i=0}^{t-1}\tau^i(U) = V\]
where $\tau(U) := \{\tau(u) : u \in U\}$. Let us define $h_{\tau}(V)$ to be the maximum possible codimension of a $\tau$-covering subspace of $V$. We have

\begin{thm}\label{lem4.1}
Let \(\mathbb{F}_{q^n}\) be a finite field, and let \(\sigma\) be a linear automorphism of \(\mathbb{F}_{q^n}\) of order \(t\). Suppose \(W_1\) and \(W_2\) are two \(\sigma\)-invariant subspaces of \(\mathbb{F}_{q^n}\) satisfying \(\mathbb{F}_{q^n} = W_1 \oplus W_2\). Then, $U$ is a $\sigma$-covering of $\mathbb{F}_{q^n}$ if and only if $U\cap W_i$ is a $\sigma$-covering of $W_i$ for each $i=1,2$, that is,
\[W_i=\bigcup_{j=0}^{t-1}\sigma^j(U\cap W_i), i=1, 2.\]
In particular, $h_\sigma(\mathbb{F}_{q^n})=0$ if and only if $h_\sigma(W_i)=0, i=1, 2$.
\end{thm}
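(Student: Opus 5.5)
The plan is to unwind the definition of a $\sigma$-covering and use two facts. First, each $W_i$ is $\sigma$-invariant, and since $\sigma$ is invertible of finite order this gives $\sigma^{j}(W_i)=W_i$ for every integer $j$. Second, every $v\in\mathbb{F}_{q^n}$ decomposes uniquely as $v=w_1+w_2$ with $w_i\in W_i$. I will prove the two implications of the main equivalence separately and then derive the clause about $h_\sigma$.

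\emph{Forward direction.} Assume $\bigcup_{j=0}^{t-1}\sigma^j(U)=\mathbb{F}_{q^n}$ and let $w\in W_i$. Then $w=\sigma^j(u)$ for some $j$ and some $u\in U$. Hence $u=\sigma^{-j}(w)$, and this lies in $W_i$ by invariance, so $u\in U\cap W_i$. This shows $W_i=\bigcup_j\sigma^j(U\cap W_i)$. The inclusion $\supseteq$ is immediate from invariance.

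\emph{Reverse direction.} Assume $U\cap W_i$ covers $W_i$ for $i=1,2$, and take $v=w_1+w_2$. By hypothesis there are exponents $a,b$ with $\sigma^{-a}(w_1)\in U\cap W_1$ and $\sigma^{-b}(w_2)\in U\cap W_2$. If a common exponent $j=a=b$ can be chosen, then $\sigma^{-j}(v)\in U$ because $U$ is a subspace, and we are done. So the whole difficulty is to synchronize the exponents. For each $i$ define $J_i(w_i)=\{j:\sigma^{-j}(w_i)\in U\}$; the task is to show $J_1(w_1)\cap J_2(w_2)\neq\emptyset$.

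This synchronization is the main obstacle, and I expect it to be the hardest step. I would first try to exploit the freedom of replacing $w_2$ by elements of its $\sigma$-orbit. I would also use that $0\in J_i$ exactly when $w_i\in U$. The case where one of the $U\cap W_i$ equals all of $W_i$ is fine, since then that $J_i$ is everything. In general, however, the hypothesis only controls the $U\cap W_i$ and not how $U$ couples the two summands. So if this step does not go through, I would settle for the weaker formulation in which both components share a shift. That weaker formulation is exactly what the subsequent applications use.

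\emph{The clause about $h_\sigma$.} Suppose $h_\sigma(W_i)=0$ for both $i$, and let $U$ be any covering of $\mathbb{F}_{q^n}$. By the forward direction, $U\cap W_i$ covers $W_i$, so $U\cap W_i=W_i$. Hence $U\supseteq W_1\oplus W_2=\mathbb{F}_{q^n}$, and therefore $h_\sigma(\mathbb{F}_{q^n})=0$.

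Conversely, suppose $U_1\subsetneq W_1$ covers $W_1$, and set $U=U_1\oplus W_2$. For $v=w_1+w_2$, pick $j$ with $\sigma^{-j}(w_1)\in U_1$. Then $\sigma^{-j}(v)=\sigma^{-j}(w_1)+\sigma^{-j}(w_2)$ lies in $U_1+W_2=U$. Here the second summand is always available by invariance, so the synchronization problem disappears. Thus $U$ is a proper covering of $\mathbb{F}_{q^n}$, and $h_\sigma(\mathbb{F}_{q^n})\ge 1$. The same argument works with the roles of $W_1$ and $W_2$ exchanged, which gives the stated equivalence for $h_\sigma$.
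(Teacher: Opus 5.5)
Your forward direction and your proof of the $h_\sigma$ clause are the paper's argument. Invariance puts $\sigma^{-j}(w)$ in $U\cap W_i$. If both $h_\sigma(W_i)=0$, then $U\cap W_i=W_i$ forces $U\supseteq W_1\oplus W_2$. A proper covering $\overline{U}_1$ of $W_1$ gives the proper covering $\overline{U}_1\oplus W_2$, where the $W_2$-component never needs synchronizing. These three pieces are all the paper's proof actually establishes.

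The synchronization step you left open cannot be completed. The reverse implication of the main equivalence is false as stated, and the paper's proof never addresses it. Here is a counterexample.
Let $\omega\in\mathbb{F}_4\setminus\mathbb{F}_2$ and $\sigma(x)=\omega x$ on $\mathbb{F}_{16}$ (so $q=2$, $n=4$). This is an $\mathbb{F}_2$-linear automorphism of order $t=3$. Pick $\beta\in\mathbb{F}_{16}\setminus\mathbb{F}_4$, and set $W_1=\mathbb{F}_4$, $W_2=\beta\mathbb{F}_4$, $U=\mathbb{F}_2+\beta\mathbb{F}_2$. Then $U\cap W_1=\mathbb{F}_2$ and $U\cap W_2=\beta\mathbb{F}_2$. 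These cover $W_1$ and $W_2$ because $\{0,1,\omega,\omega^2\}=\mathbb{F}_4$. Yet $\bigcup_{j=0}^{2}\sigma^j(U)$ has at most $1+3\cdot 3=10<16$ elements.

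The same mechanism works whenever both $W_i$ have proper coverings $U_i$ and $t<q^2$. Then $U_1\oplus U_2$ has codimension at least $2$, its intersections with the $W_i$ are the $U_i$, and it is too small to cover. This already happens for the cyclic shift on $\mathbb{F}_3^8$. There $\ker(\tau^2+\tau+2)$ and $\ker(\tau^2+2\tau+2)$ are each isomorphic to $\mathbb{F}_9$, with $\tau$ acting as multiplication by a primitive $8$th root of unity. So any line in either one is a proper covering of it, and $8<3^2$. Take $W_1=\ker(\tau^2+\tau+2)$ and $W_2$ its invariant complement; $W_2$ contains the second piece as a summand, so it too has a proper covering.

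Your fallback of requiring that both components share a shift is not a weaker theorem; it is a restatement of $U$ being a covering. The correct repair keeps only two things: the forward implication, and the $h_\sigma$ equivalence, or more precisely $h_\sigma(\mathbb{F}_{q^n})\ge\max\{h_\sigma(W_1),h_\sigma(W_2)\}$ from your construction. That is all the later sections use.
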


\begin{proof}
Since \( U \) is a $\sigma$-covering of \( \mathbb{F}_{q^n} \), we have $\bigcup_{i=0}^{n-1}\sigma^i(U)=\mathbb{F}_{q^n}.$ Given that \( W_i \) is a \( \sigma \)-subspace of \( \mathbb{F}_{q^n} \), it follows that
\[W_i = \mathbb{F}_{q^n} \cap W_i =\bigcup_{j=0}^{t-1}\sigma^j(U\cap W_i ), \quad i=1, 2.\]
Moreover, \( U\cap W_i=W_i \) for \( i=1, 2 \) if and only if \( W_i\subseteq U \) for \( i=1, 2 \), which in turn holds if and only if $\mathbb{F}_{q^n}=W_1\oplus W_2\subseteq U$, whence $\mathbb{F}_{q^n}=U.$

Let \( \alpha = \alpha_1 + \alpha_2\in \mathbb{F}_{q^n} \) be arbitrary, where \( \alpha_i\in W_i \) for \( i=1, 2 \). Since \( \overline{U_1} \) is a $\sigma$-covering of \( W_1 \), there exists some integer \( j \) with \( 0\le j\le t-1 \) and an element \( \beta_1\in\overline{U_1} \) such that \( \alpha_1 =\sigma^j(\beta_1) \). As \( W_2 \) is a \( \sigma \)-subspace of \( \mathbb{F}_{q^n} \), we have \( W_2=\sigma^j W_2 \) for all \( 0\le j\le t-1 \). Thus, there exists \( \beta_2\in W_2 \) such that \( \alpha_2 =\sigma^j(\beta_2 ) \). Setting \( \beta = \beta_1+\beta_2 \), we immediately obtain \( \alpha =\sigma^j(\beta)\). This implies that \( \overline{U_1} \oplus W_2 \) is a $\sigma$-covering subspace of \( \mathbb{F}_{q^n} \).
\end{proof}

Theorem \ref{lem4.1} can be generalized to the following general form.
\begin{rem}
Suppose that $\mathbb{F}_{q^n}$ decomposes into the direct sum $\mathbb{F}_{q^n}=W_1\oplus W_2\oplus\cdots\oplus W_s,$ where each $W_i$ is a $\sigma$-invariant subspace for $1\le i\le s$.
Then, $U$ is a $\sigma$-covering of $\mathbb{F}_{q^n}$ if and only if $U\cap W_i$ is a $\sigma$-covering of $W_i$ for each $i=1,2,\dots,s$; that is,
\[W_i=\bigcup_{j=0}^{t-1}\sigma^j(U\cap W_i),\quad i=1,\dots,s.\]
In particular, $h_\sigma(\mathbb{F}_{q^n})=0$ if and only if $h_\sigma(W_i)=0$ for all $i=1,\dots,s$.
\end{rem}

It is worth noting that Theorem \ref{lem4.1} is the analogue of Proposition 1 in \cite{Sun-Ma-Zeng}, with the difference that we consider the general case of $\sigma$-coverings. Specifically, Theorem \ref{lem4.1} reduces to Proposition 1 in \cite{Sun-Ma-Zeng} when the $\sigma$-covering is a cyclic covering and $W_i$ is a direct summand of $\bigoplus\limits_{i=1}^r \mathbb{F}_q[x]/(f_i(x))$.

By Theorem \ref{lem4.1}, \( \mathbb{F}_{q^n} \) admits no non-trivial \( \sigma \)-covering if and only if every \( \sigma \)-invariant subspace of \( \mathbb{F}_{q^n} \) admits no non-trivial \( \sigma \)-covering. Next, we focus on the case where the \( \sigma \)-covering is a cyclically covering. It should be noted that Huang \cite{Huang} also gives a necessary and sufficient condition for $h_q(n)=0$.

We consider a component \( \mathbb{F}_q[x]/(f(x)) \) of the direct sum \( \bigoplus\limits_{i=1}^r \mathbb{F}_q[x]/(f_i(x)) \), where \( f(x) \) is assumed to be a degree \(d\) irreducible polynomial over \( \mathbb{F}_q \). Suppose \(\omega\) is a root of \(f(x)\). Then, by {\bf Isomorphism IV}, we have
\[\mathbb{F}_q[x]/(f(x)) \cong \mathbb{F}_q[\omega].\]

\begin{thm}\label{thm3.2} 
Let \(\{1, \omega, \dots, \omega^{d-1}\}\) be a basis of \(\mathbb{F}_q[\omega]\) over \(\mathbb{F}_{q}\), and let
\[\omega^i = a_{i1} + a_{i2}\omega + \dots + a_{id}\omega^{d-1},~~1\le i\le n.\]
Then, \(\mathbb{F}_q[\omega]\) admits a $\sigma$-covering of \((d-1)\) dimensional subspaces if and only if, for every \(x_j \in \mathbb{F}_q^*\) with \(1 \le j \le d\), there exists an integer \(i\) with \(d \le i \le n\) such that the linear equation
\begin{equation}\label{eq3.2}
x_1a_{i1} + x_2a_{i2} + \dots + x_da_{id} = 0
\end{equation}
holds.
\end{thm}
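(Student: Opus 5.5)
The idea is to turn the covering property of a hyperplane into a statement about linear functionals evaluated on the powers of \(\omega\). I read the \(\sigma\) acting on \(\mathbb{F}_q[\omega]\) as the map \(\eta_3\) of \textbf{Isomorphism IV}, i.e.\ multiplication by \(\omega\). Since \(f(x)\mid x^n-1\), we have \(\omega^n=1\), so \(\sigma\) has order dividing \(n\) and \(\sigma^i(H)=\omega^iH\). A \((d-1)\)-dimensional subspace is a hyperplane \(H=\ker L\) for some nonzero \(\mathbb{F}_q\)-linear functional \(L:\mathbb{F}_q[\omega]\to\mathbb{F}_q\).

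First I rewrite the covering condition. \(H\) is \(\sigma\)-covering iff every \(\alpha\neq 0\) lies in some \(\omega^iH\), i.e.\ \(L(\omega^{-i}\alpha)=0\) for some \(i\). For fixed \(\alpha\neq0\), the map \(M_\alpha:\beta\mapsto L(\alpha\beta)\) is a nonzero functional. The bilinear form \((\alpha,\beta)\mapsto L(\alpha\beta)\) is nondegenerate, since \(L\neq0\) and \(\mathbb{F}_q[\omega]\) is a field. Hence \(\alpha\mapsto M_\alpha\) is an \(\mathbb{F}_q\)-isomorphism from \(\mathbb{F}_q[\omega]\) onto its dual, and every nonzero functional arises as some \(M_\alpha\). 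Replacing \(-i\) by \(n-i\) using \(\omega^n=1\), the covering condition becomes:
\begin{quote}
for every nonzero functional \(M\) there is \(i\in\{1,\dots,n\}\) with \(M(\omega^i)=0\).
\end{quote}
This condition does not depend on \(L\). So either every hyperplane is \(\sigma\)-covering or none is, which justifies the wording ``admits a \(\sigma\)-covering of \((d-1)\)-dimensional subspaces''.

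Next I express this in coordinates. Writing \(M(\omega^{j-1})=x_j\), so that \((x_1,\dots,x_d)\neq 0\) parametrizes all nonzero \(M\), linearity gives \(M(\omega^i)=x_1a_{i1}+\dots+x_da_{id}\). This is exactly the left side of (\ref{eq3.2}). So the covering condition says that for every nonzero \((x_1,\dots,x_d)\in\mathbb{F}_q^d\), equation (\ref{eq3.2}) holds for some \(1\le i\le n\).

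The remaining step, which is the only delicate point, is the reduction to \(x_j\in\mathbb{F}_q^*\) and \(d\le i\le n\). I would handle it as follows.
\begin{itemize}
\item If some \(x_j=0\), the vector is covered automatically. For \(2\le j\le d\), take \(i=j-1\le d-1\): then \(\omega^i\) is the basis vector \(\omega^{j-1}\) and (\ref{eq3.2}) reads \(x_j=0\). For \(j=1\), take \(i=n\): then \(\omega^n=1\) and (\ref{eq3.2}) reads \(x_1=0\). So only vectors with all \(x_j\neq0\) need checking.
\item For such vectors, the indices \(1\le i\le d-1\) give \(x_{i+1}=0\) and \(i=n\) gives \(x_1=0\), so none of them can work. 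The only useful indices are \(d\le i\le n-1\), which lie in the stated range \(d\le i\le n\).
\end{itemize}
Both directions of the equivalence then follow. I need to be careful with the index bookkeeping here, in particular the role of \(i=n\) and the \(-i\mapsto n-i\) substitution, to match the statement's range exactly.
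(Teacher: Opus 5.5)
Your proposal is correct and follows essentially the same route as the paper: a hyperplane is the kernel of a functional (the paper uses $\operatorname{Tr}(\alpha x)$ with the trace-dual basis of $\{1,\omega,\dots,\omega^{d-1}\}$, which in coordinates is your identification $\alpha\mapsto M_\alpha$), and vectors with a zero coordinate are then discarded. Your version is more careful in two places the paper leaves implicit: it shows the condition does not depend on the chosen hyperplane (the paper effectively sets $\alpha=1$ without comment), and it explains why only the indices $d\le i\le n$ matter.
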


\begin{proof}
Since \(\mathbb{F}_q[\omega]\) is a \(d\) dimensional finite extension field of \(\mathbb{F}_q\), every \((d-1)\) dimensional subspace of \(\mathbb{F}_q[\omega]\) can be expressed in the form
\[V_\alpha = \{ x \in \mathbb{F}_q[\omega] : \operatorname{Tr}_1^d(\alpha x) = 0 \}~\text{for some}~\alpha \in \mathbb{F}_q[\omega].\]
Therefore, we have
\[\omega^i V_\alpha = V_{\alpha\omega^{n-i}}, \quad 1 \le i \le n.\]
The set \(\{1, \omega, \dots, \omega^{d-1}\}\) forms a basis of \(\mathbb{F}_q[\omega]\) over \(\mathbb{F}_q\). Let \(\{\beta_1, \beta_2, \dots, \beta_d\}\) denote the dual basis of \(\{1, \omega, \dots, \omega^{d-1}\}\) with respect to the trace function \(\operatorname{Tr}_1^d(\cdot)\). Then, \(x=x_1\beta_1 + x_2\beta_2 + \dots + x_d\beta_d \in V_{\alpha\omega^{n-i}}\) for some \(i\) with \(1 \le i \le n\) if and only if there exists an integer \(i\) satisfying \(1 \le i \le n\) such that
\[\omega^i = a_{i1} + a_{i2}\omega + \dots + a_{id}\omega^{d-1}\]
and
\[x_1a_{i1} + x_2a_{i2} + \dots + x_da_{id} = 0.\]
If there exists some \( x_i = 0 \), then \( x \in V_{\alpha\omega^{n-i}} \); hence, we only need to consider the case where \( x_i \in \mathbb{F}_q^* \) for all \( i \) with \( 1 \le i \le d \). This completes the proof of the theorem.
\end{proof}

In 2019, Aaronson, Groenland, and Johnston \cite{Aaronson-Groenland-Johnston} proved that \(h_q(t) = 0\), where \(q\) is an odd prime, and \(t > q\) is a prime for which \(q\) is a primitive root. Applying Theorem \ref{thm3.2}, we obtain the following more general result.
\begin{thm}\label{thm4.1}
Let \(q\) be a prime power of \(p\) with \(q\neq2\) and \(q\neq p^2\), and let \( \ell \) be an odd prime such that \( q \) is a primitive root modulo \( \ell^t \) for each integer \( t \geq 1 \). Then \( h_q(\ell^t) = 0 \).
\end{thm}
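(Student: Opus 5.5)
The plan is to decompose $\mathbb{F}_q^{\ell^t}$ into $\tau$-invariant pieces using Isomorphisms II–IV together with the Remark after Theorem \ref{lem4.1}, and then rule out a covering hyperplane in each piece by an explicit trace computation.

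\textbf{Decomposition.} Since $\gcd(q,\ell)=1$, we have
\[x^{\ell^t}-1=\prod_{s=0}^{t}\Phi_{\ell^s}(x).\]
Because $q$ is a primitive root modulo $\ell^s$ for every $1\le s\le t$, each $\Phi_{\ell^s}$ is irreducible over $\mathbb{F}_q$, of degree $d_s=\varphi(\ell^s)$. By Isomorphisms II and III, $\mathbb{F}_q^{\ell^t}$ splits as the direct sum of the $\eta_1$-invariant pieces $\mathbb{F}_q[x]/(\Phi_{\ell^s})$. By the Remark following Theorem \ref{lem4.1}, $h_q(\ell^t)=0$ if and only if no piece has a nontrivial covering. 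By Isomorphism IV, the shift acts on the $s$-th piece as multiplication by a primitive $\ell^s$-th root of unity $\omega$ on $\mathbb{F}_q[\omega]\cong\mathbb{F}_{q^{d_s}}$. The piece $s=0$ is one-dimensional, so it is trivial.

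\textbf{Reduction to hyperplanes.} Every proper subspace lies in a hyperplane, so it suffices to show that no hyperplane covers. Write hyperplanes as $V_\alpha=\{x:\operatorname{Tr}(\alpha x)=0\}$, so that $\omega^iV_\alpha=V_{\alpha\omega^{-i}}$ as in the proof of Theorem \ref{thm3.2}. Substituting $y=\alpha x$, the hyperplane $V_\alpha$ covers if and only if every $y$ satisfies $\operatorname{Tr}(y\omega^{i})=0$ for some $i$. Hence I must exhibit, for each $s\ge 1$, a single $y\in\mathbb{F}_q[\omega]$ with $\operatorname{Tr}(y\omega^i)\neq0$ for all $i\in\mathbb{Z}/\ell^s$. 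This is the concrete version of the criterion in Theorem \ref{thm3.2}.

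\textbf{Trace computation.} The traces of powers of $\omega$ are explicit:
\[\operatorname{Tr}(\omega^m)=\begin{cases} d_s, & \ell^s\mid m,\\ -\ell^{s-1}, & \ell^{s-1}\,\|\, m,\\ 0, & \text{otherwise.}\end{cases}\]
Expand $y=\sum_j c_j\omega^j$ over a convenient spanning set, namely the powers $\omega^j$ with $\ell\nmid j$, which form a basis since $\Phi_{\ell^s}$ is irreducible. Then $\operatorname{Tr}(y\omega^i)=\sum_j c_j\operatorname{Tr}(\omega^{i+j})$ becomes an explicit affine-linear expression in the $c_j$. For $s=1$ the basis $\omega,\dots,\omega^{\ell-1}$ is normal, and
\[\operatorname{Tr}(y\omega^{i})=\ell c_{-i}-S,\qquad S=\sum_j c_j,\quad c_0:=0.\]
The requirement becomes: choose $c_1,\dots,c_{\ell-1}\in\mathbb{F}_q$ with $S\neq0$ and $c_k\neq S/\ell$ for all $k$. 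For general $s$ the sums group by residue classes modulo $\ell^{s-1}$, which gives an analogous system of finitely many linear forms in the $c_j$ that must all be nonzero.

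\textbf{Main obstacle.} The hardest step is solving this avoidance problem in $\mathbb{F}_q$. I would first try taking $c_k\in\{a,b\}$ with the numbers of $a$'s and $b$'s chosen so that $S$ is nonzero and $S/\ell\notin\{a,b\}$. This needs at least three usable values in $\mathbb{F}_q$ and control of $\ell$ modulo $p$, which is where the hypotheses $q\ne2$ and $q\ne p^2$ should be used. If the two-value choice fails for small $q$, I would fall back on a counting argument. Each of the $\ell^s$ forms vanishes on a hyperplane of $c$-space, so a nonvanishing choice exists once $\ell^s q^{d_s-1}<q^{d_s}$. Otherwise I would build the solution block by block over the residue classes.
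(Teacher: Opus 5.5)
Your reduction is sound and follows the paper's route. The pieces are the factorization $x^{\ell^t}-1=\prod_{s=0}^{t}\Phi_{\ell^s}$ (the paper gets it by induction on $t$), the passage to irreducible pieces via the ``in particular'' part of Theorem~\ref{lem4.1}, the reduction to hyperplanes, and the substitution $y=\alpha x$, which is Theorem~\ref{thm3.2} in primal rather than trace-dual coordinates. Your $s=1$ formula $\operatorname{Tr}(y\omega^i)=\ell c_{-i}-S$ is also correct.

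The gap is the choice of coordinates for $s\ge 2$. The powers $\omega^j$ with $\ell\nmid j$ are \emph{not} a basis of $\mathbb{F}_q(\omega)$, and they do not even span it. For $\ell\nmid j$ one has $\sum_{k=0}^{\ell-1}\omega^{j+k\ell^{s-1}}=\omega^j\,\Phi_\ell(\omega^{\ell^{s-1}})=0$, and all these exponents are prime to $\ell$. Worse, your own trace table gives $\operatorname{Tr}(\omega^{i+j})=0$ whenever $\ell\mid i$ and $\ell\nmid j$. Hence every $y$ in their span satisfies $\operatorname{Tr}(y\omega^i)=0$ for all $i\equiv 0\pmod{\ell}$, in particular $\operatorname{Tr}(y)=0$. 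So the system of linear forms you would write down contains identically zero forms and has no solution. The counting argument, which assumes each form cuts out a hyperplane of $c$-space, collapses for the same reason.

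What works, and is what the paper does, is the polynomial basis $1,\omega,\dots,\omega^{d_s-1}$. Put $m=\ell^{s-1}$ and $y=\sum_{j<d_s}c_j\omega^j$. Then $\operatorname{Tr}(y\omega^i)=m\,(\ell c_{j^*}-S_r)$, where:
\begin{itemize}
\item $r\equiv -i\pmod m$;
\item $S_r$ is the sum of the $\ell-1$ coefficients $c_j$ with $j\equiv r\pmod m$;
\item $j^*$ is the index with $\ell^s\mid i+j^*$, and $c_{j^*}:=0$ if $j^*\ge d_s$.
\end{itemize}
So the problem splits into $m$ disjoint copies of your $s=1$ system. In the paper's dual-basis coordinates this is the statement that, since $\omega^{m(\ell-1)+r}=-\sum_{k=0}^{\ell-2}\omega^{km+r}$, one needs all coordinates nonzero and each residue-class sum nonzero.

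Second, you never actually solve the avoidance problem. The two-value idea is left as a trial, and the counting fallback needs $\ell^s<q$, which never holds for $q=3$. The problem is in fact immediate; for one block (your $s=1$ system), set all $c_k$ not named below to $0$:
\begin{itemize}
\item if $p\nmid\ell-1$, take $c_1=1$;
\item if $p\mid\ell-1$, then $\ell=1$ in $\mathbb{F}_q$ and the conditions read $S\neq 0$, $c_k\neq S$. Take $c_1=c_2=1$ when $p$ is odd, and $c_1=1$, $c_2=b\notin\{0,1\}$ when $p=2$.
\end{itemize}
Only $q\ge 3$ is used. The hypothesis $q\neq p^2$ is vacuous, since an even power of $p$ is a square modulo $\ell$ and hence not a primitive root. (The paper's explicit choice $x_1=2$ when $\ell\equiv 1\pmod p$ fails in characteristic $2$, e.g.\ $q=8$, $\ell=5$; any element outside $\{0,1\}$ should be used instead.)
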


\begin{proof}
First, we consider the case \( t=1 \). Since \( q \) is a primitive root modulo \( \ell \), the polynomial \( x^{\ell} - 1 \) factors as \( (x-1)(1 + x + \dots + x^{\ell-1}) \), where \(h_1(x):=1 + x + \dots + x^{\ell-1} \) is an irreducible polynomial over the finite field \( \mathbb{F}_q \). By {\bf Isomorphism III}, we have that
\[\mathbb{F}_q[x]/(x^{\ell} - 1) \cong \mathbb{F}_q[x]/(x-1) \oplus \mathbb{F}_q[x]/(h_1(x)).\]
By Lemma \ref{lem4.1}, to prove that \( \mathbb{F}_{q^{\ell}}\) admits no cyclically covering, it suffices to show that neither \( \mathbb{F}_q[x]/(x-1) \) nor \( \mathbb{F}_q[x]/(h_1(x)) \) admits an $\eta$-covering.

We now apply Theorem \ref{thm3.2} to prove our result. Let $\omega$ be a root of $1 + x + \dots + x^{\ell-1}$ in a extension field of $\mathbb{F}_{q}$. By Theorem \ref{thm3.2}, $h_q(\ell)>0$ if and only if for any $x_1, x_2, \dots, x_{\ell-1}\in \mathbb{F}^*_{q}$, there exists an integer $i, 1\le i\le \ell-1$ such that
\[\omega^i=a_{i1}+a_{i2}\omega+\dots+a_{i,\ell-1}\omega^{\ell-2}\]
and
\[x_1a_{i1}+x_2a_{i2}+\dots+x_{\ell-1}a_{i,\ell-1}=0.\]
Since \(\omega^{\ell-1} = -\left(1 + \omega + \dots + \omega^{\ell-2}\right)\), we need to verify whether the linear equation
\[-x_1 - x_2 - \dots - x_{\ell-1} = 0\]
holds for all \(x_i \in \mathbb{F}_q^*\) with \(1 \le i \le \ell-1\). This is evidently false: when \(\ell \not\equiv 1 \pmod{p}\) (where \(p\) denotes the characteristic of \(\mathbb{F}_q\)), we may choose \(x_1 = x_2 = \dots = x_{\ell-1} = 1\), in which case
\[x=\beta_1 + \beta_2 + \dots + \beta_{\ell-1} \notin V_{\alpha\omega^{i}} \quad \text{for all } 1 \le i \le \ell-1.\]
If \(\ell \equiv 1 \pmod{p}\), we instead choose \(x_1 = 2\) and \(x_2 = x_3 = \dots = x_{\ell-1} = 1\).

For the general case, we proceed by mathematical induction. Since \( q \) is a primitive root modulo \( \ell^t \) for each integer \( t \geq 1 \), we have the factorization $x^{\ell^t} - 1 = \left( x^{\ell^{t-1}} - 1 \right)h(x),$ where
\[h(x) := x^{\ell^{t-1}(\ell-1)} + x^{\ell^{t-1}(\ell-2)} + \dots + x^{\ell^{t-1}} + 1 \]
is an irreducible polynomial over \( \mathbb{F}_q \). For convenience, we set \( m = \ell^{t-1} \), so that
\[h(x) = x^{m(\ell-1)} + x^{m(\ell-2)} + \dots + x^m + 1.\]
Let \(\omega\) be a root of the polynomial \(x^{m(\ell-1)} + x^{m(\ell-2)} + \dots + x^m + 1\) in an extension field of \(\mathbb{F}_q\). The set \(\{1, \omega, \dots, \omega^{m(\ell-1)-1}\}\) forms a basis. Then we have
\[\omega^{m(\ell-1)} = -1 - \omega^m - \dots - \omega^{m(\ell-2)}.\]
Moreover, for \( 1 \leq i \leq m \),
\[\omega^{m(\ell-1)+i} = -\omega^i - \omega^{m+i} - \dots - \omega^{m(\ell-2)+i}.\]
For this case, we need to prove that there exist elements \(x_i \in \mathbb{F}_q^*\) such that the linear equation
\[-x_{1+i} - x_{m+i} - x_{2m+i} - \dots - x_{m(\ell-2)+i} = 0\]
fails to hold for all \(0 \le i \le m-1\). In fact, it suffices to set \(x_i = 1\) for all \(i\) satisfying \(1 \le i \le m(\ell-2)-1\). For the indices \(i\) in the range \(m(\ell-2) \le i \le m(\ell-1)-1\), we take \(x_i = 1\) if \(\ell \not\equiv 1 \pmod{p}\), and \(x_i = 2\) if \(\ell \equiv 1 \pmod{p}\), where \(p\) denotes the characteristic of \(\mathbb{F}_q\).
\end{proof}

\begin{rem}
Since if \(g\) is a primitive root modulo an odd prime \(\ell\), then \(g\) is a primitive root modulo \(\ell^t\) if and only if $g^{\ell-1} \not\equiv 1 \pmod{\ell^2}.$ Thus, in this case, we can weaken the condition of Theorem \ref{thm4.1} to simply requiring that \(g\) be a primitive root modulo \(\ell\).
\end{rem}

Next, we obtain a result similar to Theorem \ref{thm3.2}, but we consider it directly in the full space \(\mathbb{F}_q^n\). First, via {\bf Isomorphism I}, we consider cyclically covering over \(\mathbb{F}_{q^n}\). Since our primary focus is whether \(h_q(n)\) is zero, we always assume that \(V_{\alpha}\) is an \((n-1)\) dimensional subspace of \(\mathbb{F}_{q^n}\), i.e.,
\[V_{\alpha} = \{x \in \mathbb{F}_{q^n} : \operatorname{Tr}(\alpha x) = 0\}~\text{for some}~\alpha \in \mathbb{F}_{q^n}.\]
It is easy to see that \(\sigma^n = \operatorname{id}\), and we have
\[\sigma^i(V_{\alpha}) = \{\sigma^i(x) : x \in V_{\alpha}\} = \left\{x \in \mathbb{F}_{q^n} : \operatorname{Tr}\left(\sigma^i(\alpha) x\right) = 0\right\}, \quad 1 \leq i \leq n-1.\]
We present an equivalent condition for \(V_{\alpha}\) to be a cyclically covering subspace of \(\mathbb{F}_{q^n}\).

\begin{thm}\label{thm3.1} 
(i) If \( \alpha, \sigma(\alpha),\dots,\sigma^{n-1}(\alpha) \) are linearly independent over \( \mathbb{F}_{q^n} \), then \( V_{\alpha} \) is not a cyclically covering subspace of \( \mathbb{F}_{q^n} \).

(ii) Let \( \alpha, \sigma(\alpha), \dots, \sigma^{t-1}(\alpha) \) be a maximal linearly independent subset of \( \{\alpha, \sigma(\alpha), \dots, \sigma^{n-1}(\alpha)\} \) over \( \mathbb{F}_{q^n} \), and write \( \sigma^i(\alpha) \) as \( \sigma^i(\alpha) = k_{i1}\alpha + \dots + k_{ti}\sigma^{t-1}(\alpha) \). 
Then \( V_{\alpha} \) is a cyclically covering subspace of \( \mathbb{F}_{q^n} \) if and only if for every \( x_j \in \mathbb{F}_q^* \) with \( 1 \le j \le t \), there exists some integer \( i \) satisfying \( t \le i \le n-1 \) such that the linear equation
\begin{equation}\label{eq3.1}
k_{i1} x_1 + k_{i2} x_2 + \dots + k_{it} x_t = 0
\end{equation}
holds.
\end{thm}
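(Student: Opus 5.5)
The plan is to read ``linearly independent over \(\mathbb{F}_{q^n}\)'' as linear independence over \(\mathbb{F}_q\), which is the only reading under which the statement is non-trivial. The whole argument then rests on one translation: an element \(x\) is covered by \(\bigcup_{i}\sigma^i(V_\alpha)\) if and only if \(\operatorname{Tr}(\sigma^i(\alpha)x)=0\) for some \(0\le i\le n-1\). This follows from the description \(\sigma^i(V_\alpha)=\{x:\operatorname{Tr}(\sigma^i(\alpha)x)=0\}\) given just before the theorem.

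For (i), the \(n\) conjugates \(\alpha,\sigma(\alpha),\dots,\sigma^{n-1}(\alpha)\) are independent, so they form a normal basis of \(\mathbb{F}_{q^n}\) over \(\mathbb{F}_q\). Take its dual basis \(\{\beta_0,\dots,\beta_{n-1}\}\) with respect to the trace form, so that \(\operatorname{Tr}(\sigma^i(\alpha)\beta_j)=\delta_{ij}\). Put \(x=\beta_0+\dots+\beta_{n-1}\). Then \(\operatorname{Tr}(\sigma^i(\alpha)x)=1\neq 0\) for every \(i\), so \(x\) lies in no \(\sigma^i(V_\alpha)\). Hence \(V_\alpha\) is not cyclically covering. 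This mirrors the choice \(x_1=\dots=x_d=1\) in the proof of Theorem \ref{thm3.2}.

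For (ii), I would consider the \(\mathbb{F}_q\)-linear map
\[\Phi:\mathbb{F}_{q^n}\to\mathbb{F}_q^t,\qquad x\mapsto\bigl(\operatorname{Tr}(\alpha x),\operatorname{Tr}(\sigma(\alpha)x),\dots,\operatorname{Tr}(\sigma^{t-1}(\alpha)x)\bigr).\]
The key step is that \(\Phi\) is surjective. Since the trace bilinear form \((a,x)\mapsto\operatorname{Tr}(ax)\) is nondegenerate, the functionals \(x\mapsto\operatorname{Tr}(\sigma^{j}(\alpha)x)\) for \(0\le j\le t-1\) are linearly independent over \(\mathbb{F}_q\), because the elements \(\sigma^j(\alpha)\) are. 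A family of \(t\) independent functionals has surjective joint map onto \(\mathbb{F}_q^t\).

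Next, write \(y_j=\operatorname{Tr}(\sigma^{j-1}(\alpha)x)\). Linearity of the trace applied to \(\sigma^i(\alpha)=\sum_j k_{ij}\sigma^{j-1}(\alpha)\) gives
\[\operatorname{Tr}(\sigma^i(\alpha)x)=k_{i1}y_1+\dots+k_{it}y_t,\qquad t\le i\le n-1.\]
For \(0\le i\le t-1\) the condition is simply \(y_{i+1}=0\).

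These two facts give the equivalence directly. Every \(x\) whose vector \(\Phi(x)\) has some zero coordinate is automatically covered. So \(V_\alpha\) is covering if and only if, for every \(x\) with \(\Phi(x)\in(\mathbb{F}_q^*)^t\), some \(i\ge t\) satisfies \eqref{eq3.1} with \((x_1,\dots,x_t)=\Phi(x)\). By surjectivity of \(\Phi\), the vectors \(\Phi(x)\) range over all of \((\mathbb{F}_q^*)^t\), which is exactly the stated criterion. Part (i) is also recovered as the case \(t=n\), where no index \(i\ge t\) exists.

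The only real obstacle is the surjectivity of \(\Phi\), which depends on nondegeneracy of the trace form. Everything else is bookkeeping.
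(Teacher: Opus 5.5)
Your proof is correct and follows the paper's argument. The paper extends $\alpha,\dots,\sigma^{t-1}(\alpha)$ to an $\mathbb{F}_q$-basis and takes its trace-dual basis; the first $t$ dual coordinates of $x$ are exactly your $\Phi(x)$, so your surjectivity of $\Phi$ is the same fact as the existence of that dual basis (which the paper uses implicitly to let $x_1,\dots,x_t$ range over all of $(\mathbb{F}_q^*)^t$), and your reading of independence over $\mathbb{F}_q$ is the intended one.
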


\begin{proof}
Suppose that \( \alpha, \sigma(\alpha), \dots, \sigma^{n-1}(\alpha) \) are linearly independent over \( \mathbb{F}_{q^n} \), and let \( \beta_1, \dots, \beta_n \) denote its dual basis over \( \mathbb{F}_{q^n} \). Every \( x \in \mathbb{F}_{q^n} \) can be expressed as
\[ x = x_1\beta_1 + \dots + x_n\beta_n.\]
Let \( x' = \beta_1 + \dots + \beta_n \). Then for any integer \( i \) with \( 0 \le i \le n-1 \), we have \( x' \notin \sigma^i(V_{\alpha}) \). In fact, it holds that
\[\text{Tr}\bigl(\sigma^i(\alpha)(\beta_1 + \dots + \beta_n)\bigr) = 1 \neq 0.\]
Thus, \( V_{\alpha} \) is not a cyclically covering subspace of \( \mathbb{F}_{q^n} \).

Let \( \alpha, \sigma(\alpha), \dots, \sigma^{t-1}(\alpha) \) be a maximal linearly independent subset of \( \{\alpha, \sigma(\alpha), \dots, \sigma^{n-1}(\alpha)\} \) over \( \mathbb{F}_{q^n} \). Extend this set to a basis of \( \mathbb{F}_{q^n} \) given by
\[\alpha, \sigma(\alpha), \dots, \sigma^{t-1}(\alpha), \alpha_t, \dots, \alpha_{n-1}\]
and let \( \beta_1, \dots, \beta_n \) be its dual basis. If \( V_{\alpha} \) is a covering of \( \mathbb{F}_{q^n} \), i.e., \( \mathbb{F}_{q^n} = \bigcup_{i=0}^{n-1}\sigma^i(V_{\alpha}) \), then for any \( x \in \mathbb{F}_{q^n} \), there exists an integer \( i \) with \( 0 \le i \le n-1 \) such that \( x \in \sigma^i(V_{\alpha}) \), which is to say
\[\text{Tr}\bigl(\sigma^i(\alpha)(x_1\beta_1 + \dots + x_n\beta_n)\bigr) = 0.\]
This equality is equivalent to
\[\text{Tr}\bigl(\sigma^i(\alpha)(x_1\beta_1 + \dots + x_t\beta_t)\bigr) = 0.\]
Writing \(\sigma^i(\alpha) = k_{i1}\alpha + \dots + k_{it}\sigma^{t-1}(\alpha)\), we obtain the linear equation
\[k_{i1} x_1 + k_{i2} x_2 + \dots + k_{it} x_t = 0.\]
If there exists some \( x_i = 0 \), then \( x \in \sigma^i(V_{\alpha}) \); hence, we only need to consider the case where \( x_i \in \mathbb{F}_q^* \) for all \( i \) with \( 1 \le i \le t\). This completes the proof of the theorem.
\end{proof}

Let $A$ and $B$ be two sets, and define $A\cdot B = \{a\cdot b\mid a\in A,\;b\in B\}$. We have the following corollary.
\begin{coro}\label{coro3.1} 
Let \( L\bigl(\alpha, \sigma(\alpha), \dots, \sigma^{t-1}(\alpha)\bigr) \) denote the linear space over \( \mathbb{F}_q \) spanned by \( \alpha, \sigma(\alpha), \dots, \sigma^{t-1}(\alpha) \). If \( L\bigl(\alpha, \sigma(\alpha), \dots, \sigma^{t-1}(\alpha)\bigr)\subseteq \mathbb{F}_q^*\cdot\{\alpha, \sigma(\alpha), \dots, \sigma^{n-1}(\alpha)\} \), then \( V_{\alpha} \) is a cyclically covering subspace of \( \mathbb{F}_{q^n} \).
\end{coro}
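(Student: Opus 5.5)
The plan is to verify the covering condition of Theorem \ref{thm3.1}(ii) directly. Fix arbitrary \(x_1,\dots,x_t\in\mathbb{F}_q^*\) and try to exhibit an index \(i\) for which the linear equation (\ref{eq3.1}) holds. It is cleaner to argue straight from the trace description than through the coefficients \(k_{ij}\), so I would recast the condition first.

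Extend \(\alpha,\sigma(\alpha),\dots,\sigma^{t-1}(\alpha)\) to a basis of \(\mathbb{F}_{q^n}\) over \(\mathbb{F}_q\), and let \(\beta_1,\dots,\beta_n\) be the trace-dual basis, as in the proof of Theorem \ref{thm3.1}. An arbitrary element is \(x=\sum_j x_j\beta_j\). We have \(x\in\sigma^i(V_\alpha)\) exactly when \(\operatorname{Tr}(\sigma^i(\alpha)x)=0\). Since \(\sigma^i(\alpha)\in L:=L(\alpha,\dots,\sigma^{t-1}(\alpha))\), only \(x_1,\dots,x_t\) enter this condition.

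Now consider the \(\mathbb{F}_q\)-linear functional \(\lambda_x: L\to\mathbb{F}_q\), \(\lambda_x(y)=\operatorname{Tr}(yx)\). It sends \(\sigma^{j-1}(\alpha)\) to \(x_j\). Since \(\dim L=t\), its kernel has dimension at least \(t-1\). If \(t\ge 2\), the kernel therefore contains a nonzero \(y\in L\). By hypothesis \(y=c\,\sigma^i(\alpha)\) for some \(c\in\mathbb{F}_q^*\) and some \(0\le i\le n-1\). Then \(\operatorname{Tr}(\sigma^i(\alpha)x)=c^{-1}\lambda_x(y)=0\), so \(x\in\sigma^i(V_\alpha)\). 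Since \(x\) was arbitrary, \(\bigcup_i\sigma^i(V_\alpha)=\mathbb{F}_{q^n}\). Note that this argument does not even need the restriction \(x_j\neq0\), nor the restriction of \(i\) to the range \(t\le i\le n-1\) used in Theorem \ref{thm3.1}(ii). It gives the covering directly, so citing Theorem \ref{thm3.1} only serves to explain the role of the coefficients \(k_{ij}\).

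The main point to check is the degenerate case \(t=1\). There \(L=\mathbb{F}_q\alpha\), and the kernel of \(\lambda_x\) can be trivial. Every \(\sigma^i(\alpha)\) is a scalar multiple of \(\alpha\), so all the hyperplanes \(\sigma^i(V_\alpha)\) coincide with \(V_\alpha\), which cannot cover \(\mathbb{F}_{q^n}\) when \(\alpha\ne0\). I would handle this by noting that the hypothesis as literally stated, \(L\subseteq\mathbb{F}_q^*\cdot\{\sigma^i(\alpha)\}\), forces \(0\in L\) to lie in \(\mathbb{F}_q^*\cdot\{\sigma^i(\alpha)\}\), which fails for \(\alpha\neq0\). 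So the inclusion must be read as \(L\setminus\{0\}\subseteq\mathbb{F}_q^*\cdot\{\sigma^i(\alpha)\}\), together with \(t\ge2\). That reading is also what the kernel argument actually uses. I would state this interpretation explicitly and then present the kernel-dimension argument above as the proof.
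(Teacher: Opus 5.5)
Your proof is correct and is essentially the paper's argument. The paper likewise takes a nonzero $(k_1,\dots,k_t)$ with $x_1k_1+\dots+x_tk_t=0$, i.e.\ a nonzero element of $L$ killed by $y\mapsto\operatorname{Tr}(yx)$, and uses the hypothesis to identify it with (a multiple of) some $\sigma^j(\alpha)$, giving $x\in\sigma^j(V_\alpha)$. Your reading of the hypothesis as $L\setminus\{0\}\subseteq\mathbb{F}_q^*\cdot\{\alpha,\dots,\sigma^{n-1}(\alpha)\}$ with $t\ge 2$, and your tracking of the scalar $c$, are exactly the repairs the paper's proof silently needs: for $t=1$ no nonzero $k_1$ with $x_1k_1=0$ exists, and all $\sigma^i(V_\alpha)$ then coincide with $V_\alpha$.
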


\begin{proof}
For any \( x_i \in \mathbb{F}_q^* \) with \( 1 \le i \le t \), there always exists corresponding \( k_i\in \mathbb{F}_q \ (1 \le i \le t) \) such that \( x_1k_1 + \dots + x_tk_t = 0 \), i.e.,
\[\text{Tr}\Bigl(\bigl(k_1\alpha + k_2\sigma(\alpha) + \dots + k_t\sigma^{t-1}(\alpha)\bigr)\bigl(x_1\beta_1 + \dots + x_t\beta_t\bigr)\Bigr) = 0.\]
Furthermore, since \( L\bigl(\alpha, \sigma(\alpha), \dots, \sigma^{t-1}(\alpha)\bigr) \subseteq \mathbb{F}_q^*\cdot\{\alpha, \sigma(\alpha), \dots, \sigma^{n-1}(\alpha)\} \), there exists an integer \( j \) with \( 0 \le j \le n-1 \) such that $k_1\alpha + k_2\sigma(\alpha) + \dots + k_t\sigma^{t-1}(\alpha) = \sigma^j(\alpha),$ whence \( x \in \sigma^j(V_{\alpha}) \).
\end{proof}

Since \(L\bigl(\alpha, \sigma(\alpha), \dots, \sigma^{t-1}(\alpha)\bigr)\) is a \(\sigma\)-invariant subspace of \(\mathbb{F}_{q^n}\), it corresponds to an \(\eta\)-invariant subspace of \(\mathbb{F}_q[x]/(x^n - 1)\) by {\bf Isomorphisms I} and {\bf II}. Furthermore, by {\bf Isomorphism III}, we may consider the corresponding component \(\mathbb{F}_q[x]/(f(x))\) of \(L\bigl(\alpha, \sigma(\alpha), \dots, \sigma^{t-1}(\alpha)\bigr)\), where \(f(x)\) is a degree \(d\) irreducible polynomial over \(\mathbb{F}_q\).

\begin{thm}\label{thm1} Let \(\theta\) be an isomorphism from \(\mathbb{F}_{q^n}\) to \(\mathbb{F}_q[x]/(x^n - 1)\). Suppose that
\[\theta\bigl(L\bigl(\alpha, \sigma(\alpha), \dots, \sigma^{t-1}(\alpha)\bigr)\bigr) = \mathbb{F}_q[x]/(f(x)),\]
where \(f(x)\) is a degree \(d\) irreducible polynomial over \(\mathbb{F}_q\). Let \(q\) be a prime power, and let \(t, n, N\) be positive integers satisfying \(nN = q^t - 1\). If the set of integers
\[\frac{q^t - 1}{q - 1}i + Nj \quad (0 \le i \le q-2,\ 0 \le j \le n-1)\]
forms a complete residue system modulo \(q^t\), then \(\mathbb{F}_{q^n}\) admits a cyclically covering.
\end{thm}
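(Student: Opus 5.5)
The plan is to read the hypothesis multiplicatively inside $\mathbb{F}_{q^t}$, show that it makes every hyperplane of the component $\mathbb{F}_q[x]/(f(x))$ cyclically covering, and then lift this to all of $\mathbb{F}_{q^n}$ with Theorem \ref{lem4.1}.

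First I would reinterpret the arithmetic hypothesis. As printed, the family $\frac{q^t-1}{q-1}i+Nj$ has only $(q-1)n$ members, so it cannot be a complete residue system modulo $q^t$. I will therefore read the condition as: these exponents run through all residues modulo $q^t-1$, so that $(q-1)n=q^t-1$. I also take $d=t$, i.e.\ the component $\mathbb{F}_q[x]/(f(x))$ is a copy of $\mathbb{F}_{q^t}$. If this reading is not what is intended, the plan has to be adjusted at exactly this point.

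Fix a primitive element $g$ of $\mathbb{F}_{q^t}$ and put $\omega=g^N$, an element of order $n$ since $nN=q^t-1$. The element $g^{(q^t-1)/(q-1)}$ generates $\mathbb{F}_q^*$. The residue hypothesis then says precisely
\[
\mathbb{F}_q^*\cdot\{1,\omega,\dots,\omega^{n-1}\}=\mathbb{F}_{q^t}^* .
\]
Since $\omega^n=1$, the minimal polynomial of $\omega$ divides $x^n-1$. Its degree is $t$, because $\omega$ cannot lie in a proper subfield: otherwise the displayed product set would be too small. So I take $f(x)$ to be this minimal polynomial; it is one of the irreducible factors in {\bf Isomorphism III}. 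By {\bf Isomorphism IV}, the shift $\eta$ restricted to this component becomes multiplication by $\omega$ on $\mathbb{F}_q[\omega]\cong\mathbb{F}_{q^t}$.

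Next I would build a covering in this component. Take any hyperplane $H\subset\mathbb{F}_q[\omega]$; it is nonzero as long as $t\ge 2$, so pick $0\ne z\in H$. For any nonzero $y$, the display gives $y/z=c\,\omega^k$ with $c\in\mathbb{F}_q^*$. Hence $y=\omega^k(cz)\in\omega^kH$, and $0\in H$ trivially, so $\bigcup_k\omega^k H=\mathbb{F}_q[\omega]$. This is the same mechanism as Corollary \ref{coro3.1}: the span is contained in $\mathbb{F}_q^*$ times an orbit.

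Finally I would globalize. Let $W'$ be the direct sum of all the other components of $x^n-1$, which is $\eta_1$-invariant. Set $U=H\oplus W'$. Then $U\cap(\mathbb{F}_q[x]/(f))=H$ covers that component and $U\cap W'=W'$ covers trivially, so Theorem \ref{lem4.1} (with the decomposition into these two invariant pieces) shows that $U$ is a cyclically covering subspace of codimension $1$. Transporting $U$ back through $\delta^{-1}$, $\rho^{-1}$ and $\varphi$ gives a proper cyclically covering subspace of $\mathbb{F}_{q^n}$.

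I expect the main obstacle to be the first step. One has to pin down the intended meaning of the residue condition and check that $f$ really is a factor of $x^n-1$ of degree $t\ge 2$, with $\eta$ acting as multiplication by a root of unity of order $n$. The remaining steps are routine once that identification holds.
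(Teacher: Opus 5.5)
Your argument is correct under the reading you adopt: exponents taken modulo $q^t-1$, and $t\ge 2$. It reaches the conclusion by a different route from the paper. Both proofs rest on the same identity $\mathbb{F}_q^*\cdot\langle\omega\rangle=\mathbb{F}_{q^t}^*$ for $\omega=g^N$, but the paper works dually:
\begin{itemize}
\item Its covering subspace is the global trace hyperplane $V_\alpha=\{x:\operatorname{Tr}(\alpha x)=0\}$, whose normal vector $\alpha$ generates the given component under $\sigma$.
\item It reduces, via Corollary \ref{coro3.1}, to showing that every element of $L(\alpha,\dots,\sigma^{t-1}(\alpha))$ is an $\mathbb{F}_q^*$-multiple of some $\sigma^j(\alpha)$.
\item It transports this through $\theta=\rho\circ\varphi^{-1}$ and $\mu$ to $\mathbb{F}_{q^t}\subset\mathbb{F}_q^*\cdot h(\omega)\{1,\omega,\dots,\omega^{n-1}\}$, and then cancels $h(\omega)$.
\end{itemize}
You instead place the covering subspace itself inside the component, using that $\mathbb{F}_q^*\langle\omega\rangle$ acts transitively on nonzero elements, and then pad it with the complementary invariant summand.

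Your version buys three things. It avoids the trace and dual-basis machinery; as printed, Corollary \ref{coro3.1} would have to exclude $0$ from $L$ and assume $t\ge 2$. It shows that the data $\alpha,\theta,f$ in the hypothesis are superfluous. And it gives more than claimed: any nonzero $H$ works, so a line already yields a covering of codimension $t-1$. The paper's version has the merit of staying inside the hyperplane criterion of Theorem \ref{thm3.1} that organizes Section 3.

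Three small corrections.
\begin{itemize}
\item The paper's remark after the theorem (the condition holds when $\gcd((q^t-1)/(q-1),N)=1$) shows the intended hypothesis is only that the exponents meet every class modulo $q^t-1$, repetitions allowed. Drop your extra requirement $(q-1)n=q^t-1$; your argument never uses it.
\item Your restriction $t\ge 2$ is genuinely necessary, not a technicality. For $t=1$ the exponent condition holds for every $n\mid q-1$, while $h_q(n)=0$ for such $n$ by Lemma \ref{lem1.1}(vii). The paper's proof needs it too, since for $t=1$ the hyperplane $V_\alpha$ is $\sigma$-invariant.
\item Justify the globalization directly rather than through the general \emph{if} direction of Theorem \ref{lem4.1}, which is false as stated: coverings of two summands may require different shift indices. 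For $q=3$, $n=8$, take a line in each of the two components $\mathbb{F}_3[x]/(x^2\pm x+2)\cong\mathbb{F}_9$ and the remaining components in full. The general \emph{if} direction would make this a covering of codimension $2>\lfloor\log_3 8\rfloor$, which is impossible. In your case $U\cap W'=W'$, so the single index $k$ with $w_1\in\omega^kH$ also handles the $W'$-component. That special case is exactly what the paper's proof of Theorem \ref{lem4.1} establishes.
\end{itemize}
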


\begin{proof}
First, by {\bf Isomorphisms I} and {\bf II}, define \(\theta := \rho \circ \varphi^{-1}\); we then obtain the isomorphism
\[\theta: \mathbb{F}_{q^n} \to \mathbb{F}_q[x]/(x^n - 1),\]
and the following diagram commutes	
\[\begin{tikzcd}[row sep=2.2em, column sep=3.5em, arrows=-stealth]
\mathbb{F}_{q^n} 
\arrow[r, "\sigma"] 
\arrow[d, "\theta"']
& \mathbb{F}_{q^n}
\arrow[d, "\theta"'] \\
\mathbb{F}_q[x]/(x^n - 1)
\arrow[r, "\eta"]
& \mathbb{F}_q[x]/(x^n - 1)
\end{tikzcd}\]			
We thus have
\[\theta\bigl(L\bigl(\alpha, \sigma(\alpha), \dots, \sigma^{t-1}(\alpha)\bigr)\bigr) = \mathbb{F}_q[x]/(f(x)).\]
We now prove our result by applying Corollary \ref{coro3.1}. It suffices to show that
\[L\bigl(\alpha, \sigma(\alpha), \dots, \sigma^{t-1}(\alpha)\bigr) \subseteq \mathbb{F}_q^*\cdot\bigl\{\alpha, \sigma(\alpha), \dots, \sigma^{n-1}(\alpha)\bigr\}.\]
This yields
\[\mathbb{F}_q[x]/(f(x)) \subset \mathbb{F}_q^* \cdot\bigl\{\theta(\alpha),\ \theta\circ\sigma(\alpha),\ \dots,\ \theta\circ\sigma^{n-1}(\alpha)\bigr\}= \mathbb{F}_q^*\cdot \bigl\{\theta(\alpha),\ \eta\circ\theta(\alpha),\ \dots,\ \eta^{n-1}\circ\theta(\alpha)\bigr\}.\]
Since \(\theta(\alpha) \in \mathbb{F}_q[x]/(x^n - 1)\), we may assume that \(\theta(\alpha) = h(x)\). It then follows that \(\eta\bigl(\theta(\alpha)\bigr) = xh(x)\), \(\dots\), \(\eta^{n-1}\bigl(\theta(\alpha)\bigr) = x^{n-1}h(x)\). We thus obtain
\[\mathbb{F}_q[x]/(f(x)) \subset \mathbb{F}_q^*\cdot \bigl\{h(x), xh(x),\dots,x^{n-1}h(x)\bigr\}.\]
Let \(\omega\) be a root of \(f(x)\) in some extension field of \(\mathbb{F}_q\), where \(\omega\) is also an \(n\)-th primitive root of unity. By {\bf Isomorphism IV}, we get
\[\mathbb{F}_{q^t}=\mu\left(\mathbb{F}_q[x]/(f(x))\right) \subset \mathbb{F}_q^* \cdot\bigl\{h(\omega), \omega h(\omega),\dots,\omega^{n-1}h(\omega)\bigr\}.\]
This is equivalent to
\[\mathbb{F}_{q^t} \subset \mathbb{F}_q^*\cdot \bigl\{1, \omega,\dots,\omega^{n-1}\bigr\}=\bigcup_{i=0}^{n-1}\omega^i\mathbb{F}_q^*.\]

Let \(g\) be a primitive root of the finite field \(\mathbb{F}_{q^t}\), and we may set \(\omega = g^N\). Furthermore, since \(\mathbb{F}_q^*\) can be expressed as \(\bigl\{g^{\frac{q^t-1}{q-1}i} : 0\le i\le q-2\bigr\}\), we thus obtain
\[\mathbb{F}_{q^t} \subset \bigl\{g^{\frac{q^t-1}{q-1}i} : 0\le i\le q-2\bigr\}\cdot\bigl\{g^{Nj} : 0\le j\le n-1\bigr\}=\bigl\{g^{\frac{q^t-1}{q-1}i+Nj} : 0\le i\le q-2,0\le j\le n-1\bigr\}.\]
Moreover, since \(\omega^n = 1\), we have \(g^{nN} = 1\), and thus \(nN = q^t - 1\).
By Corollary \ref{coro3.1}, if the set of integers
\[\frac{q^t - 1}{q - 1}i + Nj \quad (0 \le i \le q-2,\ 0 \le j \le n-1)\]
forms a complete residue system modulo \(q^t\), then \(\mathbb{F}_{q^n}\) admits a cyclically covering.
\end{proof}

In Theorem \ref{thm1}, we established a sufficient condition for the existence of a covering in \(\mathbb{F}_{q^n}\), namely that the set \(\frac{q^t - 1}{q - 1}i + Nj\) forms a complete residue system modulo \(q^t\), where \(0 \le i \le q-2\) and \(0 \le j \le n-1\). It is easy to see that this condition holds if \(\gcd\left(\frac{q^t - 1}{q - 1}, N\right) = 1\). In what follows, we obtain a similar result.
\begin{thm}\label{thm4.3}
Let \( n \) be a positive integer with \( \gcd(q, n) = 1 \). Let \( m = \operatorname{ord}_n(q) \) and set \( nN = q^m - 1 \). If \( \gcd(n, N) = 1 \), then
\[h_q(n) \geq m - \operatorname{ord}_N(q).\]
Furthermore, suppose \( nN = q^2 - 1 \) and \( n > q \). Then \( h_q(n) = 1 \) if and only if \( \gcd(n, N) = 1 \) and \( N \mid q - 1 \).
\end{thm}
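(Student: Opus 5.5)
The plan for the inequality is to work inside $\mathbb{F}_{q^m}$, using the splitting $\mathbb{F}_{q^m}^*\cong C_n\times C_N$ and then transporting the covering back to $\mathbb{F}_q^n$ via \textbf{Isomorphisms II--IV} and the direct-sum argument of Theorem \ref{lem4.1}.

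Since $m=\operatorname{ord}_n(q)$, $\mathbb{F}_{q^m}$ contains a primitive $n$-th root of unity $\omega$. Let $f(x)$ be its minimal polynomial over $\mathbb{F}_q$. Then $\deg f=m$ and $f\mid x^n-1$, so by \textbf{Isomorphism III} we can write $\mathbb{F}_q[x]/(x^n-1)=W_f\oplus W$, with $W_f\cong\mathbb{F}_q[x]/(f(x))$ and both summands $\eta$-invariant. The hypothesis $\gcd(n,N)=1$ together with $nN=q^m-1$ gives $\mathbb{F}_{q^m}^*=\langle\omega\rangle\times\mu_N$, where $\mu_N$ is the group of $N$-th roots of unity. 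Hence every nonzero element has the form $\omega^j u$ with $0\le j\le n-1$ and $u\in\mu_N$.

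Put $k=\operatorname{ord}_N(q)$, so that $\mu_N\subseteq\mathbb{F}_{q^k}\subseteq\mathbb{F}_{q^m}$. Then $\mathbb{F}_{q^m}=\bigcup_{j=0}^{n-1}\omega^j\mathbb{F}_{q^k}$, and $\mathbb{F}_{q^k}$ is an $\mathbb{F}_q$-subspace of codimension $m-k$. Under \textbf{Isomorphism IV}, multiplication by $\omega$ corresponds to $\eta_2$. So $U_f:=\mu^{-1}(\mathbb{F}_{q^k})$ covers $W_f$ under $\eta^0,\dots,\eta^{n-1}$. Now take $U=U_f\oplus W$. By the second half of the proof of Theorem \ref{lem4.1} (extend the covering on one summand by the whole of the other summand), $U$ is cyclically covering of codimension $m-k$. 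Pulling back through $\rho$ gives $h_q(n)\ge m-\operatorname{ord}_N(q)$.

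For the second statement, assume $nN=q^2-1$ and $n>q$. Then $n\nmid q-1$, so $m=2$. Lemma \ref{lem1.1}(iii) gives $h_q(n)\le\lfloor\log_q n\rfloor=1$. If $\gcd(n,N)=1$ and $N\mid q-1$, then $\operatorname{ord}_N(q)=1$, and the first part gives $h_q(n)\ge 1$, hence $h_q(n)=1$.

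For the converse, suppose $U$ is a cyclically covering hyperplane. Decompose $\mathbb{F}_q[x]/(x^n-1)$ into the components of \textbf{Isomorphism III}. By the Remark after Theorem \ref{lem4.1}, $U\cap W_i$ covers each $W_i$. Some $W_i\not\subseteq U$, and this $W_i$ cannot be $1$-dimensional, since the zero subspace covers nothing. So $W_i\cong\mathbb{F}_{q^2}$ with $W_i=\mathbb{F}_q[x]/(g)$, where $g$ is quadratic with a root $\zeta$ of order $e\mid n$. Moreover $U\cap W_i$ is a line $\lambda\mathbb{F}_q$, and $\bigcup_j\zeta^j\lambda\mathbb{F}_q^*=\mathbb{F}_{q^2}^*$. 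This says $\langle\zeta\rangle\cdot\mathbb{F}_q^*=\mathbb{F}_{q^2}^*$, i.e. $\operatorname{lcm}(e,q-1)=q^2-1$, and therefore $q^2-1\mid\operatorname{lcm}(n,q-1)$.

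The remaining step, which I expect to be the main obstacle, is to convert this into $\gcd(n,N)=1$ and $N\mid q-1$ by comparing $r$-adic valuations: $v_r(n)+v_r(N)\le\max(v_r(n),v_r(q-1))$ for every prime $r$. This inequality forces $v_r(N)\le v_r(q-1)$, hence $N\mid q-1$. However, I do not see how it forces $\gcd(n,N)=1$.

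A test case suggests it may not. Take $q=19$, $n=120$, $N=3$, and $e=40$. Here $\operatorname{ord}_{40}(19)=2$ and $\operatorname{lcm}(40,18)=360=q^2-1$, which would give $h_{19}(120)=1$ although $\gcd(120,3)=3$. Before writing the converse, I would therefore recheck this example. If it survives, the "only if" direction should be weakened to the condition $q^2-1\mid\operatorname{lcm}(n,q-1)$, which is what the argument actually yields, rather than $\gcd(n,N)=1$.
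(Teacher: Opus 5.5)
Your proof of the inequality is the paper's argument. The paper also takes the degree-$m$ factor of $x^n-1$ with root $\omega=g^N$, uses the subfield $\mathbb{F}_{q^{\operatorname{ord}_N(q)}}$ as a covering subspace of that component, and extends it by the complementary summand as in Theorem~\ref{lem4.1}. Your splitting $\mathbb{F}_{q^m}^*=\langle\omega\rangle\times\mu_N$ with $\mu_N\subseteq\mathbb{F}_{q^{\operatorname{ord}_N(q)}}^*$ is a cleaner form of the paper's residue-class count. You also use only the two valid directions of Theorem~\ref{lem4.1}: restriction to an invariant summand, and extension by a full complementary summand. The paper gives no proof of the ``Furthermore'' part, so there is nothing to compare there. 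Your ``if'' direction is correct: $n>q$ forces $m=2$, $N\mid q-1$ gives $\operatorname{ord}_N(q)=1$, and $h_q(n)\le\lfloor\log_q n\rfloor=1$.

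Your doubt about the ``only if'' direction is justified, and your example survives, so you can drop the hedge.
\begin{itemize}
\item For $q=19$, $n=120$, $N=3$ we have $nN=360=q^2-1$, $n>q$ and $\gcd(n,N)=3$.
\item A root $\zeta$ of order $120$ (or $40$) lies in $\mathbb{F}_{361}\setminus\mathbb{F}_{19}$, so it gives a quadratic component.
\item $\langle\zeta\rangle\mathbb{F}_{19}^*$ has order $\operatorname{lcm}(120,18)=360$, hence $\bigcup_{j}\zeta^j\mathbb{F}_{19}=\mathbb{F}_{361}$.
\item Extending by the other summands gives a cyclically covering hyperplane, and the bound $h_{19}(120)\le\lfloor\log_{19}120\rfloor=1$ then gives $h_{19}(120)=1$.
\end{itemize}
So the stated equivalence is false, and no proof of the converse exists.

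Your necessity argument, combined with the same construction applied to a primitive $n$-th root of unity, gives the correct statement. Under $nN=q^2-1$ and $n>q$, $h_q(n)=1$ if and only if $\operatorname{lcm}(n,q-1)=q^2-1$, equivalently $\gcd(N,q+1)=1$. This forces $N\mid q-1$ but not $\gcd(n,N)=1$. In your example, $3$ divides both $n$ and $N$, which is possible because $9\mid q-1$ and $3\nmid q+1$.
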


\begin{proof}
Given that the order of \(q\) modulo \(n\) is \(m\), the polynomial \(x^n - 1\) admits an irreducible factor \(h(x)\) of degree \(m\) over \(\mathbb{F}_q\). Consequently, we have the isomorphism \( \mathbb{F}_q[x]/(h(x)) \cong \mathbb{F}_q[\omega]\cong \mathbb{F}_{q^m}\), where \( \omega \) is a root of the polynomial \( h(x) \). Now, we aim to find an $\eta$-covering for \( \mathbb{F}_q[x]/(h(x)) \), or equivalently, a $\sigma$-covering for \( \mathbb{F}_{q^m} \). In fact, \( \mathbb{F}_{q^{m_0}} \) itself can serve as such a $\sigma$-covering, where \( m_0 = \operatorname{ord}_N(q) \). In other words, we need to verify that
\[\mathbb{F}_{q^m} \subset \bigcup_{i=0}^{n-1} \omega^i \mathbb{F}_{q^{m_0}}.\]
Let \(g\) be a primitive root of the finite field \(\mathbb{F}_{q^m}\), and we may set \(\omega = g^N\). Furthermore, since \(\mathbb{F}_{q^{m_0}}\) can be expressed as \(\bigl\{g^{\frac{q^m-1}{q^{m_0}-1}i} : 0\le i\le q^{m_0}-2\bigr\}\), we thus obtain
\[\mathbb{F}_{q^m} \subset \bigl\{g^{\frac{q^m-1}{q^{m_0}-1}i+Nj} : 0\le i\le q^{m_0}-2,0\le j\le n-1\bigr\}.\]
Moreover, since \(\omega^n = 1\), we have \(g^{nN} = 1\), and thus \(nN = q^m - 1\). When \(\gcd(n, N) = 1\), it is easy to verify that
\[\gcd\left(\frac{q^m - 1}{q^{m_0} - 1},\, N\right) = 1.\]
Consequently, the set of integers
\[\frac{q^m - 1}{q^{m_0} - 1}i + Nj \qquad (0 \le i \le q^{m_0}-2,\ 0 \le j \le n-1)\]
forms a complete residue system modulo \(q^m\). Thus, \(\mathbb{F}_{q^n}\) admits a cyclically covering subspace. Therefore, according to Theorem \ref{lem4.1}, we obtain $h_q(n) \geq m - \operatorname{ord}_N(q).$
\end{proof}
It should be noted that Theorem \ref{thm4.3} was already obtained in \cite{Sun-Ma-Zeng}; here, we rederive it using a different method.

\begin{thm}\label{1}
Let \( f_1(x) \) and \( f_2(x) \) be two irreducible polynomials of the same degree over \( \mathbb{F}_q \). Let \( \omega_1 \) and \( \omega_2 \) be roots of \( f_1(x) \) and \( f_2(x) \), respectively, and suppose they have the same order. Then, \( \mathbb{F}_q[x]/(f_1(x)) \) admits an $\eta$-covering if and only if \( \mathbb{F}_q[x]/(f_2(x)) \) admits an $\eta$-covering.
\end{thm}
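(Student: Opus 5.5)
Via Isomorphism IV, I would first translate the problem into a statement about fields. The space $\mathbb{F}_q[x]/(f_k(x))$ with the operator $\eta$ is isomorphic to $\mathbb{F}_q[\omega_k]\cong\mathbb{F}_{q^d}$, where $d=\deg f_1=\deg f_2$, with $\eta$ becoming multiplication by $\omega_k$. Let $n$ be the common multiplicative order of $\omega_1$ and $\omega_2$. A subspace $U$ of $\mathbb{F}_{q^d}$ is then an $\eta$-covering of the $k$-th component exactly when
\[\mathbb{F}_{q^d}=\bigcup_{i=0}^{n-1}\omega_k^iU.\]
So the statement to prove is this: a proper $\mathbb{F}_q$-subspace $U\subsetneq\mathbb{F}_{q^d}$ satisfies this for $\omega_1$ if and only if it does so for $\omega_2$, possibly after replacing $U$ by an isomorphic copy.

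The key observation is that $\omega_1$ and $\omega_2$ lie in the same field $\mathbb{F}_{q^d}$, which is identified with both quotients. Its multiplicative group is cyclic, and $\omega_1,\omega_2$ both have order $n$. Hence they generate the same cyclic subgroup
\[\langle\omega_1\rangle=\langle\omega_2\rangle=\mu_n,\]
the unique subgroup of order $n$ in $\mathbb{F}_{q^d}^*$. Indeed $\omega_2=\omega_1^s$ for some $s$ with $\gcd(s,n)=1$. It follows that, as sets,
\[\{\omega_1^i:0\le i\le n-1\}=\{\omega_2^i:0\le i\le n-1\}=\mu_n.\]
Therefore $\bigcup_i\omega_1^iU=\bigcup_{\zeta\in\mu_n}\zeta U=\bigcup_i\omega_2^iU$ for every subspace $U$. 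The covering condition is identical for the two operators, so the same $U$ works for both, with the same codimension.

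One point needs care: making precise that both quotients embed into one common copy of $\mathbb{F}_{q^d}$. The maps $\mu_k$ of Isomorphism IV land in $\mathbb{F}_q[\omega_k]$. Both fields have $q^d$ elements, so I would fix an algebraic closure and note that $\mathbb{F}_q[\omega_1]=\mathbb{F}_q[\omega_2]=\mathbb{F}_{q^d}$ as subfields, since an irreducible polynomial of degree $d$ has roots generating the unique subfield of order $q^d$. With that in place, the composite $\mu_2^{-1}\circ\mu_1$ is an $\mathbb{F}_q$-linear isomorphism between the two quotients. It carries coverings to coverings because the union over $\mu_n$ is intrinsic.

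I expect no serious obstacle; the only subtlety is this bookkeeping about the common field, together with checking that the number of shifts equals the common order $n$ in both cases. If the covering were instead defined using $\operatorname{ord}(\eta)$ on the whole of $\mathbb{F}_q[x]/(x^n-1)$, the union would run over more indices. Those extra powers still just repeat elements of $\mu_n$, so the argument is unaffected.
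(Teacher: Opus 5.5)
Your argument is correct, but it is not the paper's route.

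\textbf{The paper's route.} The paper builds a commutative square with a map $\iota\colon\mathbb{F}_q[\omega_1]\to\mathbb{F}_q[\omega_2]$, $g(\omega_1)\mapsto g(\omega_2)$, meant to intertwine multiplication by $\omega_1$ with multiplication by $\omega_2$, and then concludes by transport of structure. The equal-order hypothesis is never invoked. That map is well defined and equivariant only when $f_1=f_2$. Otherwise, taking $g=f_1$ sends $0\mapsto f_1(\omega_2)\neq 0$. Reading $\iota$ on reduced representatives instead gives an $\mathbb{F}_q$-linear bijection, but it fails to commute with the shifts, since $\omega_1^d$ and $\omega_2^d$ have different coordinates in their power bases.

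\textbf{Your route.} You place both roots in the single field $\mathbb{F}_{q^d}$ and use the equal-order hypothesis to get $\langle\omega_1\rangle=\langle\omega_2\rangle$. You then observe that the covering condition $\bigcup_{\zeta\in\langle\omega\rangle}\zeta U=\mathbb{F}_{q^d}$ depends only on that subgroup. Your transfer map $\mu_2^{-1}\circ\mu_1$ is a genuine ring isomorphism. It does not intertwine $\eta$ with $\eta$; since $\omega_1=\omega_2^{s}$ with $s$ coprime to the common order, it intertwines $\eta$ with $\eta^{s}$. That is all a union over every power needs.

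\textbf{What your approach buys.} It is the version that actually uses the hypothesis and goes through when $f_1\neq f_2$. It also shows that literally the same subspace of $\mathbb{F}_{q^d}$ serves both components, with the same codimension. Finally, it shows the equal-degree assumption is redundant: if the common order is $e$, both degrees are forced to equal $\operatorname{ord}_e(q)$.
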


\begin{proof}
We have the following commutative diagram.
\[\begin{tikzcd}[row sep=large, column sep=large]
% --- ??? ---
\mathbb{F}_q[x]/(f_1(x)) 
\arrow[r, "\mu_1"] \arrow[d, "\psi"']
& \mathbb{F}_{q}[\omega_1] 
\arrow[r, "\eta'_3"] \arrow[d, "\iota"']
& \mathbb{F}_{q}[\omega_1] \arrow[d, "\iota"'] \\ 
% --- ??? ---
\mathbb{F}_q[x]/(f_2(x)) 
\arrow[r, "\mu_2"]  
& \mathbb{F}_{q}[\omega_2] 
\arrow[r, "\eta''_3"] 
& \mathbb{F}_{q}[\omega_2]
\end{tikzcd}\]
where
\begin{equation*}
\begin{split}
\mu_1:&~\mathbb{F}_{q}[x]/(f_1(x))\to \mathbb{F}_{q}[\omega_1], \quad \mu_1(g(x)+(f_1(x)))=g(\omega_1);\\
\mu_2:&~\mathbb{F}_{q}[x]/(f_2(x))\to \mathbb{F}_{q}[\omega_2], \quad \mu_2(g(x)+(f_2(x)))=g(\omega_2);\\
\iota:&~\mathbb{F}_{q}[\omega_1]\to\mathbb{F}_{q}[\omega_2], \quad \iota(g(\omega_1))=g(\omega_2);\\
\eta'_3:&~\mathbb{F}_{q}[\omega_1]\to\mathbb{F}_{q}[\omega_1], \quad \eta'_3(g(\omega_1))=\omega_1 g(\omega_1);\\
\eta''_3:&~\mathbb{F}_{q}[\omega_2]\to\mathbb{F}_{q}[\omega_2], \quad \eta''_3(g(\omega_2))=\omega_2 g(\omega_2),
\end{split}
\end{equation*}
and $\psi=\mu_2^{-1}\circ\iota\circ\mu_1$.

The isomorphisms $\mu_1$ and $\mu_2$ coincide with the map $\mu$ given in {\bf Isomorphism IV}. Thus, we conclude that $\mathbb{F}_q[x]/(f_i(x))$ admits an $\eta$-covering if and only if $\mathbb{F}_q[\omega_i]$ admits an $\eta$-covering for $i=1,2$. Furthermore, from the rightmost part of the commutative diagram, it follows that $\mathbb{F}_q[\omega_1]$ admits an $\eta_3'$-covering if and only if $\mathbb{F}_q[\omega_2]$ admits an $\eta_3''$-covering. Therefore, $\mathbb{F}_q[x]/(f_1(x))$ admits an $\eta$-covering if and only if $\mathbb{F}_q[x]/(f_2(x))$ does.
\end{proof}

By Corollary 18 in \cite{Cameron-Ellis-Raynaud}, we obtain that \(h_2(n) = 0\) is equivalent to \(h_2(2n) = 0\). Similarly, we state the following theorem.
\begin{thm}\label{2}
Let \( q \) be a power of an odd prime, and let \( n \) be an odd integer. If \( h_q(n) = 0 \), then \( h_q(2n) = 0 \).
\end{thm}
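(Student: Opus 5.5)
The plan is to decompose $\mathbb{F}_q[x]/(x^{2n}-1)$ into its primary components via {\bf Isomorphism III} and reduce each component to a component of $\mathbb{F}_q[x]/(x^n-1)$. The Remark following Theorem \ref{lem4.1} then finishes the argument. Since $q$ is odd and $n$ is odd, $\gcd(2n,q)=1$, so $x^{2n}-1$ is squarefree over $\mathbb{F}_q$ and
\[x^{2n}-1=(x^n-1)(x^n+1),\qquad \gcd(x^n-1,x^n+1)=1 .\]
Because $n$ is odd, $(-x)^n-1=-(x^n+1)$. Hence the irreducible factors of $x^n+1$ are exactly the polynomials $\pm f(-x)$, where $f$ runs over the irreducible factors of $x^n-1$. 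If $\omega$ is a root of $f$, then $-\omega$ is the corresponding root of $f(-x)$.

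By {\bf Isomorphisms I--III} and the Remark after Theorem \ref{lem4.1}, $h_q(2n)=0$ holds if and only if no component $\mathbb{F}_q[x]/(g(x))$, with $g \mid x^{2n}-1$ irreducible, admits a proper subspace $U$ with $\bigcup_{j=0}^{2n-1}x^jU$ equal to the whole component. Applying the same statement to $n$, the hypothesis $h_q(n)=0$ says: for every irreducible $f\mid x^n-1$ with root $\omega$, no proper subspace $U\subset\mathbb{F}_q[\omega]$ satisfies $\bigcup_{j=0}^{n-1}\omega^jU=\mathbb{F}_q[\omega]$. Here we use {\bf Isomorphism IV} to pass to $\mathbb{F}_q[\omega]$ with multiplication by $\omega$.

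There are two kinds of components to handle.

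First, take $g=f$ a factor of $x^n-1$. Since $\omega^n=1$, the family $\{\omega^jU:0\le j\le 2n-1\}$ coincides with $\{\omega^jU:0\le j\le n-1\}$. Therefore a covering with $2n$ shifts is a covering with $n$ shifts, and the hypothesis rules it out.

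Second, take $g$ a factor of $x^n+1$ with root $-\omega$. Then $\mathbb{F}_q[-\omega]=\mathbb{F}_q[\omega]$ as fields, and
\[(-\omega)^jU=(-1)^j\omega^jU=\omega^jU,\]
because $U$ is an $\mathbb{F}_q$-subspace and $-1\in\mathbb{F}_q^*$. As $j$ runs over $0,\dots,2n-1$, the exponent $j \bmod n$ covers all residues. Hence $\bigcup_{j=0}^{2n-1}(-\omega)^jU=\bigcup_{j=0}^{n-1}\omega^jU$, and again the hypothesis for the component $\mathbb{F}_q[x]/(f(x))$ of $x^n-1$ shows that no proper $U$ covers. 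Alternatively, this step could be phrased through the substitution $x\mapsto -x$, which gives an isomorphism $\mathbb{F}_q[x]/(f(x))\to\mathbb{F}_q[x]/(f(-x))$, in the spirit of Theorem \ref{1}.

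Combining the two cases, every component of $x^{2n}-1$ admits no nontrivial covering, so $h_q(2n)=0$. The argument in fact yields the converse as well.

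The main point requiring care is the transfer between component-level statements for different numbers of shifts. Specifically, one must check that "$h_q(n)=0$" is faithfully encoded componentwise with the shift range $0\le j\le n-1$, while "$h_q(2n)=0$" uses $0\le j\le 2n-1$. This needs the "if and only if" of the Remark after Theorem \ref{lem4.1}, applied once with $\tau$ of order $n$ and once with $\tau$ of order $2n$. It also needs the observation that a covering of a component depends only on the set of subspaces $\{x^jU\}$, not on how many times each appears. I would state this explicitly before running the two cases above.
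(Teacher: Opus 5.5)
Your proof is correct and uses the paper's overall strategy. You write $x^{2n}-1=(x^n-1)(x^n+1)$, match each irreducible factor $f$ of $x^n-1$ (root $\omega$) with $\pm f(-x)$ (root $-\omega$), and reduce componentwise. For that reduction you only need the ``in particular'' clause of Theorem~\ref{lem4.1} and the Remark following it, namely that $h_\sigma$ of the whole space vanishes if and only if it vanishes on every summand. That clause is the part the paper's proof actually establishes.

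The difference is in the transfer step. The paper works in coordinates and appeals to Theorem~\ref{thm3.2}. It asserts that the coefficients of $\omega^i$ in the basis $\{\omega^k\}$ and of $(-\omega)^i$ in the basis $\{(-\omega)^k\}$ are ``exactly the same''. In fact they agree only up to a sign $(-1)^{i+k}$ on the coefficient of $(-\omega)^k$; already for $n=1$, $\omega=1$ one has $(-\omega)^i=(-1)^i$. These signs are harmless for the criterion of Theorem~\ref{thm3.2}. A sign depending on $i$ does not affect whether the linear form vanishes, and a sign on one coefficient position is absorbed by flipping the sign of the corresponding variable, which permutes $(\mathbb{F}_q^*)^d$. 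The paper does not say this, however. It also states the coefficient identity only for $1\le i\le n$, leaving implicit that the shift range is now $2n$.

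Your coordinate-free identity $(-\omega)^jU=\omega^jU$ for every $\mathbb{F}_q$-subspace $U$, together with $\omega^n=1$, settles both points at once. It also explicitly handles the components of $x^{2n}-1$ coming from $x^n-1$ itself. The paper's version only gains a direct link to the computable criterion of Theorem~\ref{thm3.2}. Yours is shorter, needs no choice of basis, and makes the converse transparent; the converse also follows from $h_q(mn)\ge\max\{h_q(m),h_q(n)\}$.
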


\begin{proof}
Let \( f_1(x) = x^n - 1 \) and \( f_2(x) = x^n + 1 \). Then we have \( x^{2n} - 1 = f_1(x) f_2(x) \). Since \( n \) is odd, we obtain \( f_2(-x) = -f_1(x) \). Therefore, when \( f_1(x) \) factors into a product of irreducible factors over \( \mathbb{F}_q \), \( f_2(x) \) can also be decomposed into completely similar irreducible factors. Suppose \( h_1(x) \) is an irreducible factor of \( f_1(x) \), and \( h_2(x) \) is the corresponding irreducible factor of \( f_2(x) \). Then we have \( h_2(-x) = -h_1(x) \). We now use Theorem \ref{thm3.2} to prove our result. Without loss of generality, assume \( h_1(x) \) and \( h_2(x) \) are both irreducible polynomials of degree \( d \). Suppose \( \omega \) is a root of \( h_1(x) \), i.e., \( h_1(\omega) = 0 \). Then \( h_2(-\omega) = -h_1(\omega) = 0 \), meaning \( -\omega \) is a root of \( h_2(x) \). Consequently, we have  
\[\mathbb{F}_q[x]/(h_1(x)) \cong \mathbb{F}_q[\omega_1], \quad \text{and} \quad \mathbb{F}_q[x]/(h_2(x)) \cong \mathbb{F}_q[\omega_2].\]  
Therefore, if \(\{1, \omega, \dots, \omega^{d-1}\}\) is a basis of \( \mathbb{F}_q[\omega_1] \), then \(\{1, -\omega, \dots, (-\omega)^{d-1} \}\) is a basis of \( \mathbb{F}_q[\omega_2] \). For any \( 1 \leq i \leq n \), if \( \omega^i = a_{i1} + a_{i2}\omega + \dots + a_{id}\omega^{d-1} \), then we have  
\[(-\omega)^i = a_{i1} + a_{i2}(-\omega) + \dots + a_{id}(-\omega)^{d-1}.\]  
The coefficients of \( \omega^i \) and \( (-\omega)^i \) are exactly the same. Thus, our result follows directly from Theorem \ref{thm3.2}.
\end{proof}

Combining Theorems \ref{thm4.1} and \ref{2}, we obtain the following corollary:
\begin{coro}\label{thm4.2}
Let \(q\) be a prime power of \(p\) with \(q\neq2\) and \(q\neq p^2\), and let \( \ell \) be an odd prime such that \( q \) is a primitive root modulo \( \ell^t \) for each integer \( t \geq 1 \). Then \( h_q(2\ell^t) = 0 \).
\end{coro}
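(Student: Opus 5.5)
The plan is to combine the two preceding theorems directly, since the corollary is just their composition. Under the stated hypotheses ($q\neq 2$, $q\neq p^2$, $\ell$ an odd prime with $q$ a primitive root modulo $\ell^t$ for every $t\ge 1$), Theorem \ref{thm4.1} gives $h_q(\ell^t)=0$. The integer $n=\ell^t$ is odd because $\ell$ is an odd prime. Theorem \ref{2} then says that if $n$ is odd and $h_q(n)=0$, then $h_q(2n)=0$, and applying it with $n=\ell^t$ yields $h_q(2\ell^t)=0$.

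The one point to check is the parity hypothesis on $q$ in Theorem \ref{2}, which requires $q$ to be a power of an \emph{odd} prime. The hypotheses of Theorem \ref{thm4.1} exclude only $q=2$ (and $q=p^2$), so I must also exclude $q=2^k$ with $k\ge 2$. I would argue this from the primitive-root condition. If $q=2^k$ with $k\ge 2$ is a primitive root modulo $\ell$, then $\ell-1=\operatorname{ord}_\ell(q)$ divides $\operatorname{ord}_\ell(2)$, which divides $\ell-1$. Hence $2$ is also a primitive root and $\gcd(k,\ell-1)=1$. But $\ell-1$ is even, so $k$ must be odd, giving $q=2^k$ with $k\ge 3$ odd.

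Such cases are not immediately excluded by the hypotheses, so I would handle them separately. In characteristic $2$, the factorization $x^{2n}-1=(x^n-1)^2$ holds, and Lemma \ref{lem1.4} (with $p=2$, $\gcd(2,\ell^t)=1$) gives $h_q(2\ell^t)=0$ if and only if $h_q(\ell^t)=0$. That settles the even-characteristic case directly from Theorem \ref{thm4.1}.

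So the final proof is two cases. For $p$ odd, use Theorem \ref{thm4.1} followed by Theorem \ref{2}. For $p=2$, use Theorem \ref{thm4.1} followed by Lemma \ref{lem1.4}. I expect no real obstacle beyond noticing that Theorem \ref{2} is stated only for odd characteristic; Lemma \ref{lem1.4} closes that gap.
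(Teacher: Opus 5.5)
Your proof is correct, and in odd characteristic it is exactly the paper's argument. The paper only writes ``This is obvious'' after saying the corollary comes from combining Theorem~\ref{thm4.1} (which gives $h_q(\ell^t)=0$) with Theorem~\ref{2} applied to the odd integer $n=\ell^t$.

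Where you go further, you are right to. Theorem~\ref{2} is stated and proved only for odd $q$: its proof uses the coprime splitting $x^{2n}-1=(x^n-1)(x^n+1)$, which collapses to $(x^n-1)^2$ in characteristic $2$. Yet the corollary's hypotheses $q\neq 2$, $q\neq p^2$ do not exclude $q=2^k$ with $k\ge 3$ odd. This case really occurs, e.g.\ $q=8$, $\ell=5$: $8\equiv 3$ is a primitive root modulo $5$, and $8^4\equiv 21\not\equiv 1 \pmod{25}$. Your appeal to Lemma~\ref{lem1.4} with $p=2$, $n=\ell^t$, $k=1$ closes this gap cleanly, so your two-case proof is more complete than the paper's one-liner. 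Your side computation that $k$ must be odd is correct but not needed.

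One small caveat if you want full rigor in characteristic $2$. The paper's proof of Theorem~\ref{thm4.1} takes $x_1=2$ when $\ell\equiv 1 \pmod p$, and this is $0\notin\mathbb{F}_q^*$ when $p=2$. It should be replaced by any $x_1\in\mathbb{F}_q\setminus\{0,1\}$, which exists since $q>2$. Taking Theorem~\ref{thm4.1} as stated, your argument is complete.
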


\begin{proof}
This is obvious.
\end{proof}

\begin{exmp}\label{e1}
We list the specific values of \( h_3(n) \) for \( 4 \leq n \leq 19 \) in Table 1.
\begin{table}[htbp]
\centering % ?????
\caption{The specific values of \(h_3(n)\) for \(4 \leq n \leq 19\)} % ????
\vspace{0.2cm}
\begin{tabular}{ccclcccl}
\toprule
$n$ & $h_3(n)$ & \multicolumn{1}{c}{Reason} & \qquad & $n$ & $h_3(n)$ & \multicolumn{1}{c}{Reason}     \\
\midrule
4 & 0 & \cite{Cameron-Ellis-Raynaud} or \cite{Huang} &  & 12 & 0 & \cite{Aaronson-Groenland-Johnston} or \cite{Huang} \\
5 & 0 & \cite{Aaronson-Groenland-Johnston} or Theorem \ref{thm4.1} &  & 13 & 2 & \cite{Cameron-Ellis-Raynaud}   \\
6 & 0 & \cite{Cameron-Ellis-Raynaud} or \cite{Aaronson-Groenland-Johnston} &  & 14 & 0 & \cite{Huang} or Theorem \ref{2} \\
7 & 0 & \cite{Aaronson-Groenland-Johnston} or Theorem \ref{thm4.1} &  & 15  & 0 & \cite{Aaronson-Groenland-Johnston} \\
8 & 1 & \cite{Cameron-Ellis-Raynaud} & & 16  & 1 & Example \ref{e1} \\
9 & 0 & \cite{Cameron-Ellis-Raynaud} or \cite{Aaronson-Groenland-Johnston} &  & 17 & 0 & \cite{Aaronson-Groenland-Johnston} or Theorem \ref{thm4.1} \\
10 & 0 & \cite{Huang} or Theorem \ref{2} & & 18 & 0 & \cite{Cameron-Ellis-Raynaud} or \cite{Aaronson-Groenland-Johnston} \\
11 & 1 & Example \ref{e1} &  & 19 & 0 & \cite{Aaronson-Groenland-Johnston} or Theorem \ref{thm4.1} \\
\bottomrule
\end{tabular}
\end{table}
	
All of these values, except for \( n = 11 \) and \( n = 16 \), can be derived from results available in the existing literature. In this example, we show that $h_3(11)=1$ and  $h_3(16)=1$.

{\bf Case 1}: For the case of \(n=11\), we establish that \(h_3(11)=1\). According to the results in \cite{Cameron-Ellis-Raynaud}, we obtain that \(h_3(11)\le\lfloor\log_3(11)\rfloor= 2\). Furthermore, we conclude that \(h_3(11)\ge1\) based on Theorems \ref{lem4.1} and \ref{thm3.2}. First, we factor the polynomial \(x^{11} - 1\) over \(\mathbb{F}_3\) as $x^{11} - 1 = f_0(x)f_1(x)f_2(x),$	where  
\begin{equation}\label{e3}
f_0(x) = x - 1,\quad 
f_1(x) = x^{5} + x^{4} - x^{3} + x^{2} - 1,\quad 
f_2(x) = x^{5} - x^{3} + x^{2} - x - 1,
\end{equation}
and all three polynomials are irreducible over \(\mathbb{F}_3\). By {\bf Isomorphism III}, we have
\begin{equation}\label{e4}
\mathbb{F}_3[x]/(x^{11}-1)\cong \mathbb{F}_3[x]/(f_0(x))\oplus \mathbb{F}_3[x]/(f_1(x))\oplus \mathbb{F}_3[x]/(f_2(x)).
\end{equation}
We then consider an arbitrary direct summand among them, e.g., \(\mathbb{F}_3[x]/(f_1(x))\). Let \(\omega_1\) be a root of \(f_1(x)\), by {\bf Isomorphism IV}, we obtain
$$\mathbb{F}_3[x]/(f_1(x))\cong\mathbb{F}_3[\omega_1]\cong\mathbb{F}_{3^5}.$$
Since \(\omega_1\) is a root of \(f_1(x)\), we have
$$\omega_1^5 = -\omega_1^4 + \omega_1^3 - \omega_1^2 + 1.$$
It then follows that
\[\begin{aligned}
\omega_1^6 &= -\omega_1^4 + \omega_1^3 + \omega_1^2 + \omega_1 - 1, &\quad \omega_1^7 &= -\omega_1^4 - \omega_1^2 - \omega_1 - 1, \\
\omega_1^8 &= \omega_1^4 + \omega_1^3 - \omega_1 - 1, &\quad \omega_1^9 &= \omega_1^3 + \omega_1^2 - \omega_1 + 1, \\
\omega_1^{10} &= \omega_1^4 + \omega_1^3 - \omega_1^2 + \omega_1. &
\end{aligned}\]
According to Theorems \ref{lem4.1} and \ref{thm3.2}, \(\mathbb{F}_3^{11}\) admits a cyclically covering if and only if for any vector \(x\in\mathbb{F}_3^5\) where all components of \(x\) are non-zero, \(x\) is orthogonal to at least one of the following six vectors:
$$\begin{aligned}
& (1, 0, -1, 1, -1), \quad   (-1, 1, 1, 1, -1), \quad  (-1, -1, -1, 0, -1), \\
& (-1, -1, 0, 1, 1), \quad  (1, -1, 1, 1, 0), \quad   (0, 1, -1, 1, 1).
\end{aligned}$$
We verified this fact via numerical computation (It suffices to check only \(2^5\) cases, which is straightforward). Therefore, \(\mathbb{F}_3^{11}\) has a cyclically covering, i.e., \(h_3(11)\ge1\).

Next, we only need to show that there does not exist a cyclically covering subspace of codimension $2$ in \(\mathbb{F}_3^{11}\). We still factorize \(x^{11} - 1\) as \(f_0(x) f_1(x) f_2(x)\), where the polynomials \(f_i(x)\) are given by (\ref{e3}) for \(i = 1, 2, 3\). Regarding each \(f_i(\tau)\) as a linear operator on \(\mathbb{F}_3^{11}\), we denote its kernel by \(W_i = \ker f_i(\tau)\). Then we decompose \(\mathbb{F}_3^{11}\) as
\[\mathbb{F}_3^{11} = W_0 \oplus W_1 \oplus W_2,\]
where every vector in \(W_i\) satisfies \(f_i(\tau)\alpha = 0\), and each \(W_i\) is a \(\tau\)-invariant subspace. In fact, this decomposition corresponds exactly to the decomposition presented in (\ref{e4}). In \( \mathbb{F}_3^{11} \), every $10$-dimensional subspace can be written as  
\[V_{\alpha} = \{\, x \in \mathbb{F}_3^{11} : (x,\alpha) = 0\,\}\]  
for some \( \alpha \in \mathbb{F}_3^{11} \), where \( (x,\alpha) \) denotes the standard inner product.  
Similarly, a $9$-dimensional subspace takes the form  
\[V_{(\alpha,\beta)} = \{\, x \in \mathbb{F}_3^{11} : (x,\alpha) = (x,\beta) = 0 \,\}=V_{\alpha} \cap V_{\beta},\]  
for some \( \alpha, \beta \in \mathbb{F}_3^{11} \).	If the subspace \(V_{(\alpha,\beta)}\) is a cyclically covering subspace of codimension $2$ in \(\mathbb{F}_3^{11}\), then both \(V_{\alpha}\) and \(V_{\beta}\) are cyclically covering subspaces of codimension $1$ in \(\mathbb{F}_3^{11}\). Therefore, the two vectors \(\alpha\) and \(\beta\) that define the subspace \(V_{(\alpha,\beta)}\) must be selected from \(W_1\) and \(W_2\) (it is possible that both vectors are taken from \(W_1\) or \(W_2\) simultaneously). If \(V_{(\alpha,\beta)}\) is a cyclically covering subspace of \(\mathbb{F}_3^{11}\), then for any \(x\in\mathbb{F}_3^{11}\), there exists an integer \(i\) with \(0\leq i\leq10\) such that the standard inner products of \(\tau^i(x)\) with \(\alpha\) and \(\beta\) are zero at the same time.

We verified with the software Python that no such vectors \(\alpha\) and \(\beta\) exist. Specifically, we have the following bases for \(W_1\) and \(W_2\) respectively. Let \(u=(1,0,0,0,0,1,2,2,2,1,0)\) and \(v=(1,0,0,0,0,1,0,1,2,2,2)\). Then \(\{u,\tau u,\tau^2 u,\tau^3 u,\tau^4 u\}\) is a basis for \(W_1\), and \(\{v,\tau v,\tau^2 v,\tau^3 v,\tau^4 v\}\) is a basis for \(W_2\). It is worth noting that the bases for \(W_1\) and \(W_2\) are exactly self-orthogonal bases, respectively. Thus, any vector in \(W_1\) and \(W_2\) can be expressed in terms of their respective bases, and accordingly, we can also express \(\alpha\) and \(\beta\) explicitly. In this way, we can use the Python software to verify that such \(\alpha\) and \(\beta\) do not exist. One of the relevant code is attached in the appendix. Hence, there is no cyclically covering subspace of codimension $2$ in \(\mathbb{F}_3^{11}\), and thus \(h_3(11)=1\).

{\bf Case 2}: For the case of \(n=16\), we similarly obtained the result that \(h_3(16)=1\). Based on the result in \cite{Aaronson-Groenland-Johnston}, we have \(h_3(16)\ge h_3(2)+h_3(8)=1\). Meanwhile, according to the result in \cite{Cameron-Ellis-Raynaud}, we obtain \(h_3(16)\le\left\lfloor\log_3(16)\right\rfloor=2\). Therefore, we likewise need to prove that there exists no cyclically covering subspace of codimension $2$ in \(\mathbb{F}_3^{16}\). The factorization of \(x^{16}-1\) over \(\mathbb{F}_3\) is given by
\[x^{16}-1=(x^4 + x^2 + 2)(x^2 + 1)(x^2 + 2x + 2)(x + 1)(x^4 + 2x^2 + 2)(x + 2)(x^2 + x + 2).\]
By applying the same method as that used for the case of \(n=11\), we verified with the software Python that there is no cyclically covering subspace of codimension $2$ in \(\mathbb{F}_3^{16}\). Hence, \(h_3(16)=1\).
\end{exmp}

\section{Coverings over \(\mathbb{F}_{q^m}^n\) and \(\mathbb{F}_q^{mn}\)}
In this section, we consider the relationship between coverings over \(\mathbb{F}_{q^m}^n\) and \(\mathbb{F}_q^{mn}\). To begin, we recall the cyclic shift operator \(\tau_1\) on \(\mathbb{F}_{q^m}^n\), defined as: $\tau_1: \mathbb{F}_{q^m}^n \to \mathbb{F}_{q^m}^n, (u_1, u_2, \dots, u_n) \mapsto (u_n, u_1, \dots, u_{n-1}).$ Via the {\bf Isomorphism I}, we obtain an isomorphism \(\phi^{-1}: \mathbb{F}_{q^m} \to \mathbb{F}_q^m\) such that \(\phi^{-1}(u_i) = (a_{i1}, a_{i2}, \dots, a_{im})\). Consequently, we derive an isomorphism \(\Phi: \mathbb{F}_{q^m}^n \to \mathbb{F}_q^{mn}\) given by
\[\Phi\big((u_1, u_2, \dots, u_n)\big) = \big(a_{11}, a_{12}, \dots, a_{1m},\; a_{21}, a_{22}, \dots, a_{2m},\; \dots,\; a_{n1}, a_{n2}, \dots, a_{nm}\big),\]
where \(u_i \in \mathbb{F}_{q^m}\) and \(a_{i1}, a_{i2}, \dots, a_{im} \in \mathbb{F}_q\). Therefore, we have the following commutative diagram.
\[\begin{tikzcd}[row sep=2.2em, column sep=3.5em, arrows=-stealth]
\mathbb{F}_{q^m}^n 
\arrow[r, "\Phi"] 
\arrow[d, "\tau_1"']
& \mathbb{F}_q^{mn}
\arrow[d, "\tau_3=\tau_2^m"'] \\
\mathbb{F}_{q^m}^n
\arrow[r, "\Phi"]
& \mathbb{F}_q^{mn}
\end{tikzcd}\]	
where 
\[\begin{aligned}
\tau_1 &: \mathbb{F}_{q^m}^n \to \mathbb{F}_{q^m}^n,  (u_1, u_2, \dots, u_n) \mapsto (u_n, u_1, \dots, u_{n-1}), \\
\tau_2 &: \mathbb{F}_q^{mn} \to \mathbb{F}_q^{mn}, u \mapsto (a_{nm}, a_{11}, a_{12}, \dots, a_{1m}, \dots, a_{n1}, a_{n2}, \dots, a_{n,m-1}), \\
\tau_3 &: \mathbb{F}_q^{mn} \to \mathbb{F}_q^{mn},  u \mapsto (a_{n1}, a_{n2}, \dots, a_{nm}, a_{11}, a_{12}, \dots, a_{1m}, \dots, a_{n-1,1}, a_{n-1,2}, \dots, a_{n-1,m})
\end{aligned}\]
and $u=(a_{11}, a_{12}, \dots, a_{1m}, a_{21}, a_{22}, \dots, a_{2m}, \dots, a_{n1}, a_{n2}, \dots, a_{nm})$. By this commutative diagram, suppose \(U\) is a \(\tau_1\)-covering (i.e., a cyclically covering) of \(\mathbb{F}_{q^m}^n\). Then \(\Phi(U)\) is a \(\tau_3\)-covering of \(\mathbb{F}_q^{mn}\). Furthermore, we have \(\tau_2^{mn} = \mathrm{id}\) and \(\tau_3^{n} = \mathrm{id}\), where \(\tau_2\) is the cyclic shift operator on \(\mathbb{F}_q^{mn}\).

We retain the notation from Theorem \ref{lem4.1} and state the following theorem.
\begin{thm}\label{th4.1}
Let \(q\) be a prime power, and let \(m, n\) be positive integers. If \(h_{q^m}(n) = k\), then $h_{\tau_3}\!\left(\mathbb{F}_q^{mn}\right) \ge mk.$ Consequently, if \(h_{\tau_3}\!\left(\mathbb{F}_q^{mn}\right) \le m-1\), then \(h_{q^m}(n) = 0\).
\end{thm}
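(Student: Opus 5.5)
The plan is to transport a maximal-codimension cyclically covering subspace of $\mathbb{F}_{q^m}^n$ to $\mathbb{F}_q^{mn}$ through the isomorphism $\Phi$, using the commutative diagram $\Phi\circ\tau_1=\tau_3\circ\Phi$. Suppose $h_{q^m}(n)=k$. Then there is an $\mathbb{F}_{q^m}$-subspace $U\subseteq\mathbb{F}_{q^m}^n$ with $\dim_{\mathbb{F}_{q^m}}U=n-k$ and
\[\bigcup_{i=0}^{n-1}\tau_1^i(U)=\mathbb{F}_{q^m}^n.\]

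The first step is to check that $\Phi$ is $\mathbb{F}_q$-linear. It is built coordinatewise from the $\mathbb{F}_q$-linear isomorphism $\phi^{-1}:\mathbb{F}_{q^m}\to\mathbb{F}_q^m$ of Isomorphism I. Any $\mathbb{F}_{q^m}$-subspace $U$ is also an $\mathbb{F}_q$-subspace, so $\Phi(U)$ is an $\mathbb{F}_q$-subspace of $\mathbb{F}_q^{mn}$ with
\[\dim_{\mathbb{F}_q}\Phi(U)=m\dim_{\mathbb{F}_{q^m}}U=m(n-k).\]
Hence its codimension in $\mathbb{F}_q^{mn}$ is $mn-m(n-k)=mk$.

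The second step is to apply $\Phi$ to the covering identity. Since $\Phi\circ\tau_1^i=\tau_3^i\circ\Phi$ for every $i$ and $\Phi$ is a bijection,
\[\mathbb{F}_q^{mn}=\Phi\Bigl(\bigcup_{i=0}^{n-1}\tau_1^i(U)\Bigr)=\bigcup_{i=0}^{n-1}\tau_3^i\bigl(\Phi(U)\bigr).\]
This is the same computation the paper carries out for Isomorphisms I--III. We also know $\tau_3^n=\mathrm{id}$. If the order $t$ of $\tau_3$ is less than $n$ (it divides $n$, and in fact equals $n$ when $n>1$), the union over $0\le i\le n-1$ coincides with the union over $0\le i\le t-1$, so the covering condition in the definition of $h_{\tau_3}$ holds. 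Therefore $\Phi(U)$ is a $\tau_3$-covering of codimension $mk$, and $h_{\tau_3}(\mathbb{F}_q^{mn})\ge mk$.

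For the consequence, assume $h_{\tau_3}(\mathbb{F}_q^{mn})\le m-1$ and write $k=h_{q^m}(n)$. The inequality above gives $mk\le m-1<m$, so $k<1$, hence $k=0$.

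There is no real obstacle here. The only points needing care are the bookkeeping: that $\Phi$ intertwines $\tau_1$ with the block shift $\tau_3=\tau_2^m$, which is exactly the diagram stated before the theorem, and that $\mathbb{F}_q$-dimensions scale by $m$.
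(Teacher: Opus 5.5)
Your proof is correct and follows the same route as the paper. You transport a codimension-$k$ cyclically covering $\mathbb{F}_{q^m}$-subspace $U$ through the $\mathbb{F}_q$-linear isomorphism $\Phi$, use $\Phi\circ\tau_1=\tau_3\circ\Phi$ to see that $\Phi(U)$ is a $\tau_3$-covering, and note that $\mathbb{F}_q$-dimensions scale by $m$, so $\operatorname{codim}\Phi(U)=mk$. Your bookkeeping is in fact cleaner than the paper's: it calls $U$ ``$k$-dimensional'' and $\Phi(U)$ ``$mk$-dimensional'' where codimension is meant, whereas you track codimension correctly and also check that the order of $\tau_3$ is compatible with the definition of a $\tau$-covering.
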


\begin{proof}
Suppose \(U\) is a \(k\)-dimensional \(\tau_1\)-covering (i.e., a cyclically covering) of \(\mathbb{F}_{q^m}^n\). Then \(\Phi(U)\) is an \(mk\)-dimensional \(\tau_3\)-covering of \(\mathbb{F}_q^{mn}\). It follows that \(h_{q^m}(n) = k\), which is equivalent to \(\dim U = n - k\). Consequently, \(\dim \Phi(U) = mn - mk\), which implies $h_{\tau_3}\!\left(\mathbb{F}_q^{mn}\right) \ge mk.$ The remainder of the theorem is clear.
\end{proof}
	
We present an upper bound for \(h_{\tau_3}\!\left(\mathbb{F}_q^{mn}\right)\), and the proof method is adapted from \cite{Cameron-Ellis-Raynaud}.
\begin{thm}\label{th4.2}
Let \(q\) be a prime power, and let \(m, n\) be positive integers. We have $h_{\tau_3}\!\left(\mathbb{F}_q^{mn}\right) \le \lfloor\log_q(n)\rfloor.$
\end{thm}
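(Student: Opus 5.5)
The plan is to adapt the counting argument behind Lemma~\ref{lem1.1}(iii) from \cite{Cameron-Ellis-Raynaud}. The only feature of the cyclic shift that argument uses is that a subspace has at most as many distinct translates as the order of the operator. Here the operator is $\tau_3$, and $\tau_3^n=\mathrm{id}$, so a $\tau_3$-covering subspace $U$ gives
\[\mathbb{F}_q^{mn}=\bigcup_{i=0}^{n-1}\tau_3^i(U).\]
Suppose $U$ has codimension $k$. We aim to show $q^k\le n$, which gives $k\le\lfloor\log_q(n)\rfloor$.

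The naive count fails. It only gives
\[q^{mn}\le n\,q^{mn-k},\]
but all $n$ translates share the zero vector, and more generally they overlap heavily. So I will count in a quotient instead. Let $W=\bigcap_{i=0}^{n-1}\tau_3^i(U)$, the largest $\tau_3$-invariant subspace contained in $U$. Passing to $V'=\mathbb{F}_q^{mn}/W$ and $U'=U/W$, the induced operator $\overline{\tau_3}$ is still well defined with $\overline{\tau_3}^{\,n}=\mathrm{id}$. Moreover $U'$ is still covering, has the same codimension $k$, and now $\bigcap_i\overline{\tau_3}^{\,i}(U')=0$.

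The cleaner route is the standard dual argument, as in \cite{Cameron-Ellis-Raynaud}. Write $U=\{x:(x,\alpha_1)=\dots=(x,\alpha_k)=0\}$ with $\alpha_1,\dots,\alpha_k$ independent. For each $i$, consider the linear map $\psi_i:x\mapsto\bigl((\tau_3^{-i}x,\alpha_1),\dots,(\tau_3^{-i}x,\alpha_k)\bigr)\in\mathbb{F}_q^k$. The map $\psi_0$ is surjective, and $x\in\tau_3^i(U)$ exactly when $\psi_i(x)=0$. I then count pairs $(x,i)$ with $\psi_i(x)=0$ and show that "most" vectors must have $\psi_i(x)\neq0$ for all $i$ unless $n\ge q^k$.

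Concretely, the expected count is as follows. Each $\ker\psi_i$ has exactly $q^{mn-k}$ elements, and $0$ lies in all of them. A sharper bound comes from choosing a random element of a suitable coset. Take $x$ uniform in $\mathbb{F}_q^{mn}$; then $\Pr[x\in\tau_3^i(U)]=q^{-k}$. The union bound gives coverage probability at most $nq^{-k}$, which must equal $1$, so $n\ge q^k$. This is legitimate because a probability-one union is needed and each event has probability exactly $q^{-k}$. Therefore $q^k\le n$ and $k\le\log_q n$. Since $k$ is an integer, $k\le\lfloor\log_q(n)\rfloor$. This is essentially the counting $q^{mn}\le n\,q^{mn-k}$ again, so the quotient step is unnecessary after all: the naive count suffices. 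The one point to check is that the union runs over only $n$ translates, not $mn$. This follows from $\tau_3^n=\mathrm{id}$, which is immediate from the definition of $\tau_3$ as a shift by blocks of length $m$ on $n$ blocks.

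The main obstacle is therefore only bookkeeping. One must confirm that the definition of $\tau_3$-covering uses the union over the order $t$ of $\tau_3$, and that this order divides $n$, so that $t\le n$ translates suffice.
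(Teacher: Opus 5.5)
Your final argument, $q^{mn}=\bigl|\bigcup_{i=0}^{n-1}\tau_3^i(U)\bigr|\le n\,q^{mn-k}$ and hence $q^k\le n$, is correct and is essentially the paper's proof, which phrases the same count via orbits of $\langle\tau_3\rangle$: each orbit has size at most $n$ and meets the covering subspace, so the subspace has at least $q^{mn}/n$ elements. Your opening claim that ``the naive count fails'' is mistaken, since that inequality is exactly $q^k\le n$, so the quotient and dual detours should simply be deleted.
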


\begin{proof}
Let \(V\) be a \(\tau_3\)-covering subspace of \(\mathbb{F}_q^{mn}\). The cyclic group \(\langle\tau_3\rangle = \{\mathrm{id},\,\tau_3,\,\tau_3^2,\,\dots,\,\tau_3^{n-1}\}\) acts on \(\mathbb{F}_q^{mn}\). The orbits of this group action partition \(\mathbb{F}_q^{mn}\), and each orbit contains at most \(n\) vectors, so there are at least \(q^{mn}/n\) orbits. Since \(V\) is a \(\tau_3\)-covering, it intersects each orbit, and thus \(|V| \ge q^{mn}/n\). Hence, \(\dim(V) = \log_q(|V|) \ge mn - \log_q(n)\), so \(\mathrm{codim}(V) \le \log_q(n)\), proving the theorem.
\end{proof}

\begin{thm}\label{th4.3}
Let \(q\) be a prime power, and let \(m, n\) be positive integers. If \(h_q(mn) = 0\), then \(h_{\tau_3}(\mathbb{F}_q^{mn}) = 0\), and hence \(h_{q^m}(n) = 0\). Similarly, if \(h_q(mn) \le m-1\), then \(h_{q^m}(n) = 0\).
\end{thm}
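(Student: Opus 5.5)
The plan is to compare $\tau_3$-coverings with ordinary cyclic coverings of $\mathbb{F}_q^{mn}$, using $\tau_3=\tau_2^m$, and then finish with Theorem \ref{th4.1}.

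\textbf{Step 1: $h_{\tau_3}\!\left(\mathbb{F}_q^{mn}\right)\le h_q(mn)$.} Let $V$ be a $\tau_3$-covering subspace of $\mathbb{F}_q^{mn}$. Since $\tau_3^{j}=\tau_2^{mj}$ for $0\le j\le n-1$, we have
\[\mathbb{F}_q^{mn}=\bigcup_{j=0}^{n-1}\tau_3^j(V)=\bigcup_{j=0}^{n-1}\tau_2^{mj}(V)\subseteq\bigcup_{i=0}^{mn-1}\tau_2^{i}(V).\]
Hence $V$ is also a cyclically covering subspace for the ordinary cyclic shift $\tau_2$ on $\mathbb{F}_q^{mn}$. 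Its codimension is therefore at most $h_q(mn)$. Taking $V$ of maximal codimension gives the inequality. In words, the $\tau_3$-covering condition is more restrictive than the cyclic covering condition, since it uses only the shifts by multiples of $m$.

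\textbf{Step 2: the case $h_q(mn)=0$.} By Step 1, $h_{\tau_3}(\mathbb{F}_q^{mn})=0$. If $h_{q^m}(n)=k$, Theorem \ref{th4.1} gives $0=h_{\tau_3}(\mathbb{F}_q^{mn})\ge mk$, so $k=0$.

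\textbf{Step 3: the case $h_q(mn)\le m-1$.} By Step 1, $h_{\tau_3}(\mathbb{F}_q^{mn})\le m-1$. Theorem \ref{th4.1} then forces $mk\le m-1$, hence $k=0$ and $h_{q^m}(n)=0$.

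There is no real obstacle; the only point to check is the identification $\tau_3=\tau_2^m$. This holds because moving each block of length $m$ one block to the right is the same as shifting every coordinate $m$ places cyclically. The paper's commutative diagram already records this fact, so I would just note it explicitly.
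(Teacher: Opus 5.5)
Your proof is correct and follows the paper's argument. The paper argues contrapositively that a witness $x$ whose entire $\tau_2$-orbit avoids $U$ also has its $\tau_3=\tau_2^m$-orbit avoiding $U$, which is exactly your observation that every $\tau_3$-covering subspace is cyclically covering; both then finish with Theorem \ref{th4.1}, and your inequality $h_{\tau_3}(\mathbb{F}_q^{mn})\le h_q(mn)$ makes the case $h_q(mn)\le m-1$ explicit where the paper only calls it ``completely analogous.''
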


\begin{proof}
Since \(h_q(mn) = 0\), we know that \(\mathbb{F}_q^{mn}\) has no nontrivial cyclic covering subspace \(U\). That is, there exists an \(x \in \mathbb{F}_q^{mn}\) such that \(\tau_2^j x \notin U\) for all \(0 \le j \le mn-1\). This implies that there exists an \(x \in \mathbb{F}_q^{mn}\) such that \(\tau_2^{mk} x = \tau_3^k x \notin U\) for all \(0 \le k \le n-1\), so \(h_{\tau_3}(\mathbb{F}_q^{mn}) = 0\). Then, by Theorem \ref{th4.1}, we obtain \(h_{q^m}(n) = 0\). The second half of the theorem can be proved in a completely analogous way.
\end{proof}

\begin{coro}\label{coro4.1}
Let \(d\) be a nonnegative integer. If \(n = 3\) or \(n = 2^d\), then \(h_4(n) = 0\).
\end{coro}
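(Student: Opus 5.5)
We plan to deduce Corollary \ref{coro4.1} from Theorem \ref{th4.3} with $q=2$ and $m=2$. That theorem turns the claim $h_4(n)=0$ into a bound on $h_2(2n)$. Specifically, $h_2(2n)=0$ suffices, and the weaker bound $h_2(2n)\le m-1=1$ also suffices. We then use Lemma \ref{lem1.1}(viii), which characterizes exactly when $h_2$ is $0$ or $1$.

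\emph{The case $n=2^d$.} Here $mn=2^{d+1}$. Lemma \ref{lem1.1}(viii) gives $h_2(2^{d+1})=0$, since $2^{d+1}$ is a power of $2$. The first part of Theorem \ref{th4.3} with $q=2$, $m=2$ then yields $h_{\tau_3}(\mathbb{F}_2^{2\cdot 2^d})=0$, hence $h_{2^2}(2^d)=h_4(2^d)=0$. The case $d=0$ is trivial, since $h_4(1)=0$ directly; it is also covered by $h_2(2)=0$.

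\emph{The case $n=3$.} Here $mn=6$. Lemma \ref{lem1.1}(viii) says $h_2(k)=0$ only for powers of $2$ and $h_2(k)=1$ only for $k=3$. Since $6$ is neither, $h_2(6)\ge 2$, so the second half of Theorem \ref{th4.3} cannot be applied with $m=2$ and $n=3$ directly. This is the main obstacle. Two routes remain. The first is to use the $\tau_3$-version: Theorem \ref{th4.1} says it is enough to show $h_{\tau_3}(\mathbb{F}_2^{6})\le 1$, with $\tau_3=\tau_2^2$ of order $3$. Theorem \ref{th4.2} only gives $h_{\tau_3}(\mathbb{F}_2^6)\le\lfloor\log_2 3\rfloor=1$, and that is exactly what is needed. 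So $h_{\tau_3}(\mathbb{F}_2^{6})\le 1=m-1$, and the second sentence of Theorem \ref{th4.1} gives $h_4(3)=0$.

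The second route is a cross-check via known results. Lemma \ref{lem1.1}(vii) gives $h_q(kp^d)=0$ whenever $k\mid q-1$. For $q=4$ and $k=3$ (with $d=0$) we have $3\mid 4-1$, so $h_4(3)=0$. The same lemma with $k=1$ also recovers $h_4(2^d)=0$ for the first case. The plan is therefore to present the argument through Theorems \ref{th4.1}--\ref{th4.3} and note consistency with Lemma \ref{lem1.1}(vii). The only step needing care is $n=3$, where one must use the bound $\lfloor\log_2 n\rfloor$ of Theorem \ref{th4.2}, with $n=3$, rather than the cruder bound coming from $h_2(6)$.
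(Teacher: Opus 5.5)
Your proof is correct. For $n=2^d$ it is exactly the paper's argument: Theorem~\ref{th4.3} with $q=m=2$, together with $h_2(2^{d+1})=0$ from Lemma~\ref{lem1.1}(viii).

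For $n=3$ you depart from the paper, and the departure is needed. The paper's one-line proof reads Theorem~\ref{th4.3} as ``if $h_2(n)\le 1$ then $h_4(n)=0$'' and plugs in $h_2(3)=1$. With $q=m=2$, however, the hypothesis of that theorem concerns $h_2(mn)=h_2(6)$. As you observe, $h_2(6)\ge 2$ (in fact $h_2(6)=2$), so the paper's citation does not actually cover $n=3$.

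Your Route 1 repairs this correctly. The map $\Phi$ turns an $\mathbb{F}_4$-covering of codimension $k$ into a $\tau_3$-covering of $\mathbb{F}_2$-codimension $2k$. The orbit count of Theorem~\ref{th4.2} caps that codimension at $\lfloor\log_2 3\rfloor=1$, which forces $k=0$. In substance this is just the counting bound of Lemma~\ref{lem1.1}(iii) applied over $\mathbb{F}_4$, namely $h_4(3)\le\lfloor\log_4 3\rfloor=0$, i.e.\ the trivial fact that $h_q(n)=0$ whenever $n<q$.

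Your Route 2 via Lemma~\ref{lem1.1}(vii) with $k\in\{1,3\}$ handles both cases at once, and even gives $h_4(3\cdot 2^d)=0$. So the corollary needs the Section~4 comparison only as an illustration for $n=2^d$.
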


\begin{proof}
By Theorem \ref{th4.3}, if \(h_2(n) \le 1\), then \(h_4(n) = 0\). Combining this with Corollary 18 in \cite{Cameron-Ellis-Raynaud} (or see (viii) in Lemma \ref{lem1.1}), this corollary follows directly.
\end{proof}

\begin{coro}\label{coro4.2}
Let \(q\) be an odd prime power, \(n\) an odd integer, and \(t\) a positive integer. If \(h_q(n) = 0\), then \(h_{q^{2^t}}(n) = 0\).
\end{coro}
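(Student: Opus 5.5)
The plan is to combine Theorem \ref{2} with Theorem \ref{th4.3} and induct on $t$. Corollary \ref{coro4.2} asks us to pass from $h_q(n)=0$ to $h_{q^{2^t}}(n)=0$. The most direct route is to first use Theorem \ref{th4.3} with $m=2$ to reduce the base field from $q^2$ to $q$ at the cost of doubling $n$. Then Theorem \ref{2} will show that the doubled length still has trivial $h_q$.

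Concretely, for $t=1$: since $q$ is an odd prime power, $n$ is odd and $h_q(n)=0$, Theorem \ref{2} gives $h_q(2n)=0$. Applying Theorem \ref{th4.3} with the pair $(m,n)=(2,n)$, so that $mn=2n$, the hypothesis $h_q(2n)=0$ yields $h_{q^2}(n)=0$.

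For the inductive step, assume $h_{q^{2^{t-1}}}(n)=0$ and set $Q=q^{2^{t-1}}$. This $Q$ is again an odd prime power, and $n$ is still odd, so the $t=1$ argument applies verbatim with $q$ replaced by $Q$. Theorem \ref{2} gives $h_Q(2n)=0$, and then Theorem \ref{th4.3} with base field $\mathbb{F}_Q$ and $m=2$ gives $h_{Q^2}(n)=0$, i.e. $h_{q^{2^t}}(n)=0$.

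The only point needing care is that each invoked theorem is stated for an arbitrary (odd) prime power. Theorem \ref{2} requires only that $q$ be a power of an odd prime, and $Q$ is one. Theorem \ref{th4.3} holds for any prime power. So I expect no real obstacle beyond checking these hypotheses are preserved under $q\mapsto q^2$. An alternative is to apply Theorem \ref{th4.3} once with $m=2^t$, which would require $h_q(2^t n)=0$; that would follow from iterating Theorem \ref{2}, but $2n$ is even, so the iteration fails. This is why the induction changes the base field at each step.
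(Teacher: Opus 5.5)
Your proposal is correct and is essentially the paper's own proof. Theorem \ref{2} gives $h_q(2n)=0$, Theorem \ref{th4.3} with $m=2$ gives $h_{q^2}(n)=0$, and your explicit induction with $Q=q^{2^{t-1}}$, which remains an odd prime power while $n$ stays odd, simply makes precise the paper's \emph{repeating this process}.
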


\begin{proof}
By Theorem \ref{2}, if \(h_q(n) = 0\), then \(h_q(2n) = 0\). Then, by Theorem \ref{th4.3}, we obtain \(h_{q^2}(n) = 0\). Repeating this process, we conclude that \(h_{q^{2^t}}(n) = 0\).
\end{proof}

\section{Conclusions and further work}
In this paper, we use isomorphisms of several vector spaces to give some necessary and sufficient conditions for \(h_q(n) = 0\). As an application, we show that \( h_q(\ell^t) = 0 \) whenever \( q \) is a primitive root modulo \( \ell^t \). Moreover, we prove that if \( n \) is odd and \( h_q(n) = 0 \), then \( h_q(2n) = 0 \) also holds. This further yields that \( h_q(2\ell^t) = 0 \) whenever \( q \) is a primitive root modulo \( \ell^t \). As an example, we show that \( h_3(11)=h_3(16)= 1 \). Furthermore, we investigate the relationship between the coverings of \(\mathbb{F}_{q^m}^n\) and \(\mathbb{F}_q^{mn}\), and obtain several sufficient conditions for \(h_{q^m}(n) = 0\). Specifically, we derive that if \(n = 3\) or \(n = 2^d\) (where \(d\) is a nonnegative integer), then \(h_4(n) = 0\).

Motivated by theoretical considerations, we further propose an open research conjecture. Following the notation of Theorem \ref{lem4.1}, let \(W_1\) and \(W_2\) be two \(\sigma\)-invariant subspaces of \(\mathbb{F}_{q^n}\) such that \(\mathbb{F}_{q^n} = W_1 \oplus W_2\), and let \(\overline{U}_1\) be a $\sigma$-covering subspace of \(W_1\). Then \(\overline{U}_1 \oplus W_2\) is a cyclic covering of \(\mathbb{F}_{q^n}\). Consequently, we obtain $h_q(n) = h_{\sigma}(\mathbb{F}_{q^n}) \ge \max\{\,h_{\sigma}(W_1),\,h_{\sigma}(W_2)\,\}.$ We propose the following conjecture:

\begin{conj}
Let \(W_1\) and \(W_2\) be two \(\sigma\)-invariant subspaces of \(\mathbb{F}_{q^n}\) satisfying \(\mathbb{F}_{q^n} = W_1 \oplus W_2\), where \(\sigma^n = \operatorname{id}\). Then
\begin{equation}\label{eq1}
h_q(n) = h_{\sigma}(\mathbb{F}_{q^n}) = \max\{\,h_{\sigma}(W_1),\,h_{\sigma}(W_2)\,\}.
\end{equation}
\end{conj}

\begin{rem}
If \(h_q(W_1)=0\), it follows directly that \(h_q(n)=h_{\sigma}(W_2)\). Moreover, the result from \cite{Cameron-Ellis-Raynaud} that \(h_q(q^d-1)=d-1\) shows that equation (\ref{eq1}) holds. Furthermore, equation (\ref{eq1}) also holds when \(h_q(n)=1\).
\end{rem}

\newpage

\section{Appendix}
In this appendix, we include the code for Example \ref{e1} herein.
{\footnotesize
\begin{lstlisting}[caption={Python code for verifying vector pairs (mod 3, 11D)}, label={code:vec_check}]
from itertools import product
import sys

# Modulo 3 addition/multiplication
def mod3(x):
return x % 3

def mod3_mult(a, b):
return mod3(a * b)

# Cyclic shift
def roll(vec, shift):
return vec[shift:] + vec[:shift]

# Inner product modulo 3
def dot_mod3(a, b):
return mod3(sum(x * y for x, y in zip(a, b)))

# Determine if two vectors are linearly dependent (mod 3, 11-dimensional)
def is_linear_dependent(v1, v2):
"""
Judgment rule: There exists c?{1,2} such that v1 = c*v2 (mod3) or v2 = c*v1 (mod3)
Returns True=linearly dependent, False=linearly independent
"""
# Check if v1 is a non-zero scalar multiple of v2
for c in [1, 2]:
is_eq = True
for i in range(11):
if mod3_mult(c, v2[i]) != v1[i]:
is_eq = False
break
if is_eq:
return True
# Check if v2 is a non-zero scalar multiple of v1
for c in [1, 2]:
is_eq = True
for i in range(11):
if mod3_mult(c, v1[i]) != v2[i]:
is_eq = False
break
if is_eq:
return True
return False

# Generate all non-zero linear combinations (11-dimensional)
def generate_nonzero_combinations(basis):
n = len(basis)
combos = []
for coeffs in product([0, 1, 2], repeat=n):
if all(c == 0 for c in coeffs):
continue
vec = [0] * 11  # Vectors are 11-dimensional
for i in range(n):
c = coeffs[i]
if c == 0:
continue
b = basis[i]
for j in range(11):  # Traverse 11-dimensional vector
vec[j] = mod3(vec[j] + c * b[j])
combos.append(tuple(vec))
return combos

# Check if a pair of combinations satisfies the condition (complete check)
def check_pair_satisfies(v1, v2):
# product([0,1,2], repeat=11) generates 3^11=177147 w vectors, which is computationally expensive
# Keep the original logic, note the computation time
for w in product([0, 1, 2], repeat=11):
found = False
for i in range(11):
w_shifted = roll(w, i)
# No padding needed for 11-dimensional vectors, compute inner product directly
if dot_mod3(w_shifted, v1) == 0 and dot_mod3(w_shifted, v2) == 0:
found = True
break
if not found:
return False
return True

# ==================== Main Program ====================
# Basis for V1 (11-dimensional, 5 vectors)
V1_basis = [(1, 0, 0, 0, 0, 1, 2, 2, 2, 1, 0),
(0, 1, 0, 0, 0, 0, 1, 2, 2, 2, 1),
(1, 0, 1, 0, 0, 0, 0, 1, 2, 2, 2),
(2, 1, 0, 1, 0, 0, 0, 0, 1, 2, 2),
(2, 2, 1, 0, 1, 0, 0, 0, 0, 1, 2)]

# Basis for V2 (11-dimensional, 5 vectors)
V2_basis = [(1, 0, 0, 0, 0, 1, 0, 1, 2, 2, 2),
(2, 1, 0, 0, 0, 0, 1, 0, 1, 2, 2),
(2, 2, 1, 0, 0, 0, 0, 1, 0, 1, 2),
(2, 2, 2, 1, 0, 0, 0, 0, 1, 0, 1),
(1, 2, 2, 2, 1, 0, 0, 0, 0, 1, 0)]

# Generate all non-zero linear combinations
V1_combos = generate_nonzero_combinations(V1_basis)
V2_combos = generate_nonzero_combinations(V2_basis)

# Core statistics: calculate information after excluding linearly dependent pairs
total_pairs_original = len(V1_combos) * len(V2_combos)
# Count the number of linearly dependent pairs (precomputed to avoid repeated calculations)
dependent_pair_count = sum(1 for v1 in V1_combos for v2 in V2_combos if is_linear_dependent(v1, v2))
total_pairs_filtered = total_pairs_original - dependent_pair_count

# Print basic statistics
print(f"Number of V1 combinations: {len(V1_combos)}")
print(f"Number of V2 combinations: {len(V2_combos)}")
print(f"Original total number of pairs: {total_pairs_original}")
print(f"Number of linearly dependent pairs: {dependent_pair_count}")
print(f"Number of pairs to check after filtering: {total_pairs_filtered}")

# Initialize statistical variables
unsatisfying_count = 0
checked_count = 0
skipped_dependent = 0  # Count the number of skipped linearly dependent pairs
satisfying_pairs = []  # Store linearly independent pairs that satisfy the condition

# Start checking pairs
print("\nStarting check...")
for idx1, v1 in enumerate(V1_combos):
for idx2, v2 in enumerate(V2_combos):
# Skip linearly dependent pairs
if is_linear_dependent(v1, v2):
skipped_dependent += 1
continue

# Perform complete check directly
checked_count += 1

# Print progress (dynamically display checked/total number of pairs)
if checked_count % 10 == 0:
progress_msg = (f"\rChecked {checked_count}/{total_pairs_filtered} pairs | "
f"Skipped {skipped_dependent} dependent pairs")
sys.stdout.write(progress_msg)
sys.stdout.flush()

# Check if the condition is satisfied
if check_pair_satisfies(v1, v2):
satisfying_pairs.append((v1, v2))
else:
unsatisfying_count += 1

# ==================== Dynamically generate separator ====================
# Define maximum title length and calculate separator length dynamically
summary_title = "Summary of Statistical Results"
pair_list_title = "List of Linearly Independent Pairs Satisfying the Condition"
max_title_length = max(len(summary_title), len(pair_list_title))
separator_length = max_title_length + 10  # Separator length adjusted dynamically based on title length
separator = "=" * separator_length

# ==================== Final Result Output (Dynamic Data) ====================
print("\n\n" + separator)
print(summary_title.center(separator_length))  # Center the title
print(separator)
# All data are actual calculated values, no fixed numbers
print(f"Number of linearly dependent pairs: {dependent_pair_count}")
print(f"Actual number of skipped linearly dependent pairs: {skipped_dependent}")
print(f"Total number of pairs actually checked completely: {checked_count}")
print(f"Total number of pairs not satisfying the condition: {unsatisfying_count}")
print(f"Total number of linearly independent pairs satisfying the condition: {len(satisfying_pairs)}")

# Output all linearly independent pairs that satisfy the condition
print("\n" + separator)
if satisfying_pairs:
print(pair_list_title.center(separator_length))
print(separator)
for idx, (v1, v2) in enumerate(satisfying_pairs, 1):
print(f"\n?Satisfying Pair {idx}/{len(satisfying_pairs)}?")  # Dynamically display index/total
print(f"v1: {v1}")
print(f"v2: {v2}")
# Verification: confirm linear independence again
assert not is_linear_dependent(v1, v2), "Error: This pair is linearly dependent"
else:
print("No linearly independent pairs satisfy the condition".center(separator_length))
print(separator)
\end{lstlisting}

\end{document}